\newcommand{\R}{{\mathbb R}}
\newcommand{\rp}{\R^+}
\newcommand{\rpn}{\R^+_0}
\newcommand{\N}{{\mathbb N}}
\newcommand{\dd}{\text{d}}
\newcommand{\rcll}{\text{\rm RCLL}}
\theoremstyle{plain}
\newtheorem{theorem}{Theorem}
\newtheorem{proposition}[theorem]{Proposition}
\newtheorem{lemma}[theorem]{Lemma}
\newtheorem{definition}[theorem]{Definition}
\newtheorem{example}[theorem]{Example}
\newtheorem{remark}[theorem]{Remark}
\newenvironment{rem}{\begin{remark}}{\hfill$\lozenge$\end{remark}}
\newtheorem{hypothesis}{Hypothesis}
\begin{document}

\renewcommand{\qedsymbol}{$\blacksquare$}

\newcommand{\D}{{\rm d}}
\def\ee{\varepsilon}
\def\qed{{\hfill $\Box$ \bigskip}}
\def\MM{{\cal M}}
\def\BB{{\cal B}}
\def\LL{{\cal L}}
\def\FF{{\cal F}}
\def\EE{{\cal E}}
\def\QQ{{\cal Q}}

\def\R{{\mathbb R}}
\def\L{{\bf L}}
\def\E{{\mathbb E}}
\def\F{{\bf F}}
\def\P{{\mathbb P}}
\def\N{{\mathbb N}}
\def\eps{\varepsilon}
\def\wh{\widehat}
\def\pf{\noindent{\bf Proof.} }

\title{\Large \bf Survival of homogeneous fragmentation processes with killing}
\author{Robert Knobloch\thanks{Institut f\"ur Mathematik, Goethe-Universit\"at Frankfurt am Main, 60054 Frankfurt am Main, Germany
\newline
e-mail: knobloch@math.uni-frankfurt.de} \,and  Andreas E. Kyprianou\thanks{Department of Mathematical
Sciences, University of Bath, Claverton Down, Bath, BA2 7AY, U.K.
\newline
e-mail: a.kyprianou@bath.ac.uk}  }
\date{\today}

\maketitle

\begin{abstract}
We consider a homogeneous fragmentation process with killing at an exponential barrier. With the help of two families of martingales we analyse the decay of the largest fragment for parameter values that allow for survival. In this respect the present paper is also concerned with the probability of extinction of the killed process.
\end{abstract}

\noindent {\bf AMS 2000 Mathematics Subject Classification}: 60J25, 60G09.

\noindent {\bf Keywords and phrases:}
homogeneous fragmentation, scale functions, additive martingales, multiplicative martingales, largest fragment.

\section{Introduction and main results}

This paper is concerned with a homogeneous fragmentation process in which there is an additional killing upon crossing a certain space-time barrier (this killing mechanism is defined rigorously in Section~\ref{s.killed_fragmentations}). In particular, we consider the decay of the largest fragment in this process with killing. 

The motivation for the killing procedure that we introduce in the present paper, partly stems from its relation to the Fisher-Kolmogorov-Petrovskii-Piskounov (FKPP) equation. In the context of fragmentation processes this connection is studied in \cite{Kno12}.
The role an analogous killing plays with regard to solutions of the FKPP equation in the setting of branching Brownian motions (BBM) was investigated in \cite{harrisetal}. Furthermore, this kind of killing for random multi-particle systems was considered also in various other contexts and in the literature there is some interesting recent activity in this regard.  The killing of BBM at  a linear space-time barrier  was also studied in \cite{HH07} and recently in \cite{BBS11}, where a relation of the killed BBM and its genealogy to continuous-state branching processes and the Bolthausen-Sznitman
coalescent was revealed. Regarding  similar killing schemes for branching random walks we refer e.g. to  \cite{BG10}, \cite{DS07}, \cite{DS08} and \cite{GHS11}. In the context of fragmentation processes such a killing mechanism has not been considered so far. However, the above-mentioned papers which are concerned with related types of spatial branching processes suggest that  this kind of killing  has interesting applications.

We begin our exposition by briefly reviewing what is meant by a homogeneous fragmentation process, thereby  introducing some notation. 

\subsection{homogeneous fragmentation processes}

Below we give a brief overview of the definition and structure of a homogeneous fragmentation process. The reader is referred to Bertoin \cite{bertfragbook} for a more detailed overview.
Let $\mathcal{P}$ be the space of partitions of the natural numbers. Here a partition of $\mathbb{N}$ is a sequence $\pi=(\pi_1, \pi_2, \cdots)$ of disjoint sets, called blocks, such that $\bigcup_{i\in\N} \pi_i = \mathbb{N}$. The blocks of a partition are enumerated in the increasing order of their least element, that is to say $\min \pi_i\leq \min\pi_j$ when $i\leq j$ (with the convention that $\min \emptyset = \infty$). 
Now consider the measure $\mu$ on $\mathcal{P}$, given by
\[
\mu(d\pi) = \int_{\mathcal{S}}\varrho_{\bf s}(d\pi)\nu(d{\bf s}),
\]
where $\varrho_{\bf s}$  is the law of Kingman's paint-box based on ${\bf s}\in\mathcal{S}$ (cf. page~98 of Bertoin \cite{bertfragbook}) with
 \[
\mathcal{S}: = \left\{\mathbf{s}=(s_1, s_2, \cdots) : s_1\geq  s_2 \geq \cdots \geq 0 , \, \sum_{i=1}^\infty s_i \le 1\right\},
 \] 
and the so-called {\it dislocation measure} $\nu\neq 0$  is a measure on $\mathcal{S}$
 such that 
\begin{equation} 
\nu({\bf s}\in\mathcal{S}:s_2=0)=0
\label{e.levymeasure.0}
\end{equation}
as well as
\begin{equation}
\int_{\mathcal{S}}(1-s_1)\nu(d{\bf s})<\infty.
\label{e.levymeasure}
\end{equation}
 It is known that $\mu$ is  an exchangeable partition measure, meaning that it is invariant under the action of finite permutations on $\mathcal{P}$. It is also known (cf. Chapter 3 of Bertoin \cite{bertfragbook}) that it is possible to construct a fragmentation process  on the space of partitions $\mathcal{P}$ with the help of  a Poisson point process $\{(\pi(t), k(t)) : t\geq 0\}$ on $\mathcal{P}\times \mathbb{N}$ which has intensity measure $\mu\otimes\sharp$, where $\sharp$ is the counting measure. The aforementioned   $\mathcal{P}$-valued fragmentation process is a  Markov process which we denote by  $\Pi = \{\Pi(t) : t\geq 0\}$, where
 $\Pi(t) = (\Pi_1(t), \Pi_2(t),\cdots)\in\mathcal{P}$ 
 is such that at all times $t\geq0$ for which an atom $(\pi(t), k(t))$ occurs in $(\mathcal{P}\backslash(\mathbb{N},\emptyset,\ldots))\times \mathbb{N}$, $\Pi(t)$ is obtained from $\Pi(t-)$ by partitioning the $k(t)$-th block  into the sub-blocks $(\Pi_{k(t)}(t-) \cap \pi_j(t) : j=1,2,\cdots)$. When $\nu$ is a finite measure each block experiences an exponential holding time before it fragments. 

Thanks to the properties of the exchangeable partition measure $\mu$ it can be shown that for each $t\geq 0$ the distribution of $\Pi(t)$ is exchangeable and that the  blocks of $\Pi(t)$  have asymptotic frequencies in the sense that for each $i\in\mathbb{N}$ the limit 
\[
|\Pi_i(t)|:=\lim_{n\to\infty} \frac{1}{n}\sharp\{\Pi_i(t)\cap \{1,\cdots, n\} \}
\]
exists almost surely.  Moreover, Bertoin showed that $|\Pi_i(t)|$ exists $\mathbb P$--a.s. simultaneously for all $t\ge0$ and $i\in\mathbb N$. 

We denote the countable random jump times of $\Pi$ by 
$\mathcal I\subseteq\rpn$. Further, let $\mathcal F:=(\mathcal F_t)_{t\in\rpn}$ denote the filtration generated by $\Pi$. In addition, let $\mathcal G:=(\mathcal G_t)_{t\in\rpn}$ be the sub--filtration generated by the asymptotic frequencies of $\Pi$ and let $\mathcal F^1:=(\mathcal F^1_t)_{t\in\rpn}$ denote the filtration generated by $(\Pi_1(t))_{t\in\rpn}$. 

Let us define $\xi(t)  := -\log |\Pi_1(t)|$ for every $t\geq 0$, with the convention $-\log0:=\infty$. Resorting to the Poissonian construction of the fragmentation process, Bertoin proved that $\xi = \{\xi(t) : t\geq 0\}$ is a killed
subordinator with cemetery state $\infty$ and killing rate
\begin{equation}\label{e.kappa}
\kappa:=\int_{\mathcal S}\left(1-\sum_{k\in\N}s_k\right)\nu(\dd{\bf s}).
\end{equation}
Moreover, it is well known that its Laplace exponent $\Phi$, given by 
\[
e^{-\Phi(p)} : = \mathbb{E}(e^{-p\xi(1)}),
\] 
can be characterised over an appropriate domain of $p$ through the dislocation measure $\nu$ as follows. Define the constant 
\[
\underline{ p} = \inf\left\{  p\in \mathbb{R} : \int_{\mathcal{S}} \left| 1- \sum_{i=1}^\infty s_i^ {1+p} \right| \nu(d{\bf s}) <\infty\right\}
\]
which is necessarily in $[-1,0]$.
 Then
\[
\Phi( p) = \int_{\mathcal{S}} \left(  1- \sum_{i=1}^\infty s_i^ {1+p} \right)\nu(d{\bf s})
\]
for all $p>\underline{p}$ (and we understand $\Phi(\underline{p}) = \Phi(\underline{p}+)$). The tagged fragment $\Pi_1$, and in particular its Laplace exponent $\Phi$, can be used to extract information about the decay and spatial distribution of blocks in the fragmentation process. A case in point concerns the asymptotic rate of decay  of the largest block 
\[
\lambda_1(t) : = \sup_{n\in\N}|\Pi_n(t)|, \, t\geq 0.
\]
To this end, note that $\Phi$ is strictly increasing, concave and differentiable. We shall assume that  
\begin{equation}
\label{e.bar_p}
(p+1)\Phi'(p)>\Phi(p) \text{ for some }p\in(\underline{p}, \infty).
\end{equation}
This assumption is automatically satisfied if there exists some $p^*\ge\underline p$ with $\Phi(p^*)=0$, hence in particular in the conservative case where $\nu(\mathbf{s}\in\mathcal{S}: \sum_{k\in\mathbb{N}}s_k < 1)=0$ and thus $p^*=0$.
Following the reasoning in the proof of Lemma~1 in \cite{Be4} one may proceed with (\ref{e.bar_p}) in hand to show that  there exists a unique maximal value of the function 
\[
p\mapsto c_p := \frac{\Phi(p)}{p+1}
\]
in $(\underline{p},\infty)$, which is achieved at some  $\bar p>\underline{p}$ and which is also equal  to $\Phi'(\bar p)$.
%
This maximal value  turns out to characterise the asymptotic rate of decay of the largest block, as shown in the  following proposition that is lifted from Bertoin \cite{bertfragbook}.
\begin{proposition}[cf. Corollary~1.4 of \cite{bertfragbook}]\label{largestfragmentnotkilled}
We have 
\[
\lim_{t\to\infty}\frac{-\log\lambda_1(t)}{t}=c_{\bar p}
\]
$\mathbb P$--almost surely.
\end{proposition}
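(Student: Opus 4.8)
The plan is to reduce the statement about the largest block $\lambda_1(t)$ to a statement about a single tagged fragment via a size-biasing / spinal decomposition argument, and then to apply large deviation estimates for the killed subordinator $\xi$. First I would recall that by the many-to-one formula for homogenous fragmentations (Bertoin's tagged-fragment identity), for any measurable $f\ge 0$ and any $q$ in the effective domain of $\Phi$,
\[
\mathbb{E}\Bigg(\sum_{n\in\N}|\Pi_n(t)|^{1+q} f\big(|\Pi_n(t)|\big)\Bigg)
 = \mathbb{E}\big(e^{-q\xi_t}f(e^{-\xi_t})\,\mathbf{1}_{\{\xi_t<\infty\}}\big)
 = e^{-t\,\Phi(q)}\,\mathbb{E}_q\big(f(e^{-\xi_t})\big),
\]
where under the tilted measure $\mathbb{P}_q$ the process $\xi$ is again a (possibly killed) subordinator with Laplace exponent $p\mapsto \Phi(p+q)-\Phi(q)$ and in particular mean drift $\Phi'(q)$ at time $1$. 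This is the workhorse identity; everything else is Borel–Cantelli bookkeeping.

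For the upper bound $\limsup_{t\to\infty} (-\log\lambda_1(t))/t \le c_{\bar p}$, i.e.\ $\lambda_1(t)$ does not decay faster than $e^{-c_{\bar p} t}$: I would fix $\delta>0$ and use a first-moment bound. For integer times $t=m$, Markov's inequality applied to $\sum_n |\Pi_n(m)|^{1+\bar p}\mathbf{1}_{\{|\Pi_n(m)|\le e^{-(c_{\bar p}+\delta)m}\}}$ together with the identity above gives, after a change of measure to $\mathbb{P}_{\bar p}$,
\[
\mathbb{P}\big(\lambda_1(m)\le e^{-(c_{\bar p}+\delta)m}\big)
 \le e^{(1+\bar p)(c_{\bar p}+\delta)m}\, e^{-m\Phi(\bar p)}
 = e^{-(1+\bar p)\delta m},
\]
using $(1+\bar p)c_{\bar p}=\Phi(\bar p)$. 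This is summable in $m$, so Borel--Cantelli yields $\lambda_1(m) > e^{-(c_{\bar p}+\delta)m}$ eventually; monotonicity-type control of $\lambda_1$ between integer times (the tagged fragment $\xi$ has bounded expected increments, and one can sandwich using a crude estimate on $\sup_{s\le 1}(\xi_{m+s}-\xi_m)$) upgrades this to all real $t$, and letting $\delta\downarrow 0$ along a sequence gives the $\limsup$ bound.

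For the lower bound $\liminf_{t\to\infty} (-\log\lambda_1(t))/t \ge c_{\bar p}$: fix $\delta>0$ and bound the expected number of blocks larger than $e^{-(c_{\bar p}-\delta)t}$. Taking $q=\bar p$ in the identity, $\mathbb{E}(\sharp\{n:|\Pi_n(t)|\ge e^{-(c_{\bar p}-\delta)t}\}) \le e^{(1+\bar p)(c_{\bar p}-\delta)t}\,\mathbb{E}(e^{-\bar p\,\xi_t}\mathbf{1}_{\{\xi_t \le (c_{\bar p}-\delta)t\}})$, and since under $\mathbb{P}_{\bar p}$ the process $\xi$ has drift $\Phi'(\bar p) = \Phi(\bar p)/(1+\bar p) = c_{\bar p} > c_{\bar p}-\delta$, a standard Chernoff/large-deviation estimate shows $\mathbb{E}_{\bar p}(\mathbf{1}_{\{\xi_t\le(c_{\bar p}-\delta)t\}})$ decays exponentially fast enough that the whole expectation is summable along $t=m\in\N$; Borel--Cantelli then forces $\lambda_1(m)<e^{-(c_{\bar p}-\delta)m}$ eventually. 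Again one interpolates to real $t$ and lets $\delta\downarrow 0$.

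I expect the main obstacle to be the passage from integer times to continuous time in the lower bound, because $\lambda_1$ is not monotone and a block can momentarily be large just after a dislocation; this requires a uniform control of the fluctuations $\sup_{s\in[0,1]}\sharp\{n : |\Pi_n(m+s)|\ge e^{-(c_{\bar p}-\delta)(m+s)}\}$ in terms of quantities at the integer time $m$, which in turn uses the branching/self-similarity structure and another application of the many-to-one formula (or a direct domination of the tagged fragment's increments). A secondary technical point is the exact optimisation constant $\bar p$: one must check that $\bar p$ is in the interior of the domain where the tilted Laplace exponent is finite and differentiable, which is guaranteed by $\bar p\in(0,\infty)$ and $\bar p>\underline p$, together with the defining equation $(\bar p+1)\Phi'(\bar p)=\Phi(\bar p)$ that makes $c_{\bar p}$ the maximal decay rate.
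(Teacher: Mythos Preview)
Your argument for the direction $\limsup_{t\to\infty}(-\log\lambda_1(t))/t \le c_{\bar p}$ has a genuine gap. First, the displayed computation contains a sign error: since $(1+\bar p)c_{\bar p}=\Phi(\bar p)$, one has $e^{(1+\bar p)(c_{\bar p}+\delta)m}e^{-m\Phi(\bar p)} = e^{+(1+\bar p)\delta m}$, not $e^{-(1+\bar p)\delta m}$, so the sequence is not summable. More importantly, no first-moment Markov argument can deliver this direction: you are trying to bound $\mathbb{P}(\lambda_1(m)\le a)$, i.e.\ the probability that \emph{every} block is small, and there is no useful lower bound on $\sum_n|\Pi_n(m)|^{1+\bar p}$ on that event to feed into Markov's inequality. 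The bound you wrote is in fact the standard bound for the \emph{other} direction, namely $\mathbb{P}(\lambda_1(m)\ge e^{-(c_{\bar p}-\delta)m}) \le e^{(1+\bar p)(c_{\bar p}-\delta)m}e^{-m\Phi(\bar p)} = e^{-(1+\bar p)\delta m}$, obtained from $\mathbb{P}(\exists\,n:|\Pi_n(m)|\ge a)\le a^{-(1+\bar p)}\mathbb{E}\bigl(\sum_n|\Pi_n(m)|^{1+\bar p}\bigr)$. In effect you have swapped the two directions, and the ``existence of a large block'' half remains unproved.

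The paper does not give its own proof of this proposition (it cites Bertoin's book), but the method there---mirrored in the paper's proof of the killed analogue, Theorem~\ref{l.fp.1}---proceeds via the additive martingales $M_t(p)=e^{\Phi(p)t}\sum_n|\Pi_n(t)|^{1+p}$. From $M_t(p)\ge e^{\Phi(p)t}\lambda_1(t)^{1+p}$ and almost sure convergence of $M(p)$ one gets $\liminf(-\log\lambda_1(t))/t\ge c_p$ for every $p$, hence $\ge c_{\bar p}$; for the reverse inequality one uses $M_t(p)\le\lambda_1(t)^\epsilon e^{[\Phi(p)-\Phi(p-\epsilon)]t}M_t(p-\epsilon)$ together with the crucial input that $M_\infty(p)>0$ a.s.\ for $p\in(\underline p,\bar p)$, which is exactly the positivity information your first-moment scheme cannot supply. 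Finally, your worry about passing from integer to continuous time is unfounded: $t\mapsto\lambda_1(t)$ is nonincreasing, since dislocations only shrink blocks, so interpolation is immediate in both directions.
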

In Corollary~1.4 of \cite{bertfragbook} Bertoin proves this result for fragmentation chains, but in view of Lemma~1.35 of \cite{Kno11} the same line of argument works for fragmentation processes.

\subsection{Killed homogeneous fragmentation processes}\label{s.killed_fragmentations}

Now let $c>0$ and $x\in\rpn$. We want to introduce killing of $\Pi$ upon hitting the space--time barrier 
\[
\left\{(y,t)\in\rpn\times\rpn:y<e^{-(x+ct)}\right\}
\] 
as follows. A block $\Pi_n(t)$ is killed  at the moment of its creation $t\in\mathcal{I}$ if $|\Pi_n(t)|<e^{-(x+ct)}$, see Figure~\ref{f.kfp.1}. Here, killing a block means that it is sent to a {\it cemetery state}, which we shall identify by  $\emptyset$. 
\begin{figure}[htb]
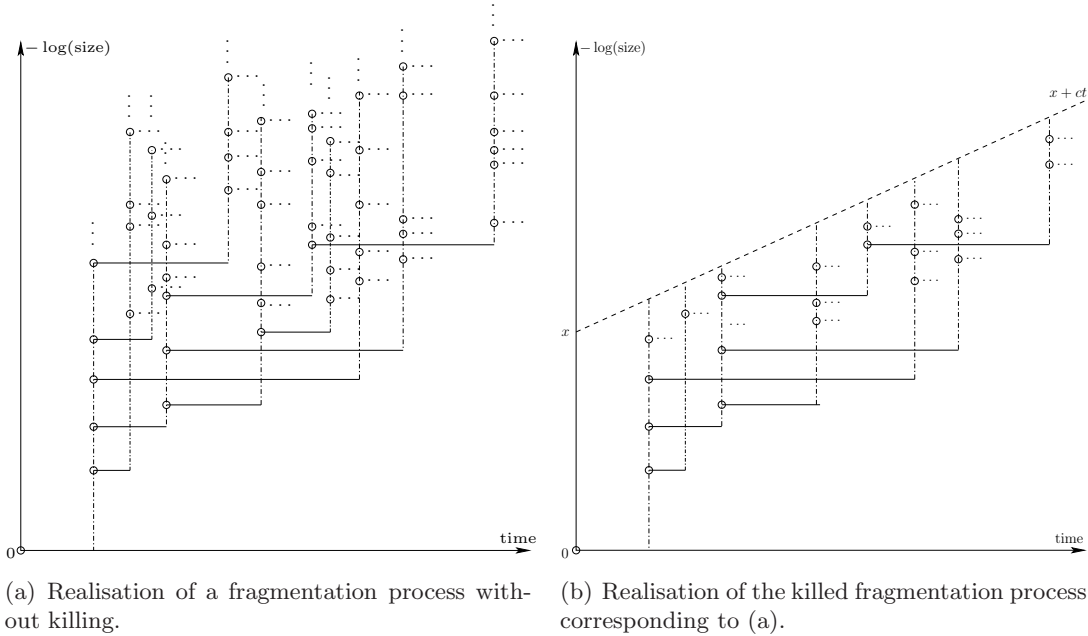
\centering
\subfigure[Realisation of a fragmentation process without killing.]{\resizebox{7cm}{!}{\input{pic3long_125_log.pstex_t}}}\quad
\subfigure[Realisation of the killed fragmentation process corresponding to (a).]{\resizebox{7cm}{!}{\input{pic2b_125_log.pstex_t}}}\quad
\caption[Killed fragmentation process]{Realisation of a  fragmentation process with finite dislocation measure without killing, in (a), and with killing, in (b).}\label{f.kfp.1}
\end{figure}

Suppose that for $t\geq 0$ we define  $\mathcal N^x_t$ to be the index set of the blocks in $(\Pi_n(t))_{n\in\N}$ that are not yet killed by time $t$. It is important to note that $N^x_t: = \text{card}(\mathcal N^x_t)$ is finite for each $t$. Indeed, as $\sum_{n\in\N}|\Pi_n(t)|\le1$ we infer that $|\Pi_n(t)|\ge e^{-(x+ct)}$ for at most $e^{x+ct}$--many $n\in\N$. That is 
\[
N^x_t\le e^{x+ct} 
\]
for all $t\in\rpn$.  Denote  by $\Pi^x:=(\Pi^x(t): t\geq 0)$, where $\Pi^x(t) =
(\Pi_n(t))_{n\in\mathcal{N}^x_t}$, the resulting killed fragmentation process and note that $\Pi^x$ is not $\mathcal{P}$-valued. 

For each $n\in\N$ the block of $\Pi^x$ containing  $n
$ has a killing time that may be finite or infinite. 
Note that the killed fragmentation process $\Pi^x$ also depends on the constant $c>0$. However, in order to keep the notation as simple as possible we do not include the parameter $c$ in the notation as this constant does not change within the results or proofs of this paper.

In this paper we shall answer the question whether it is possible that the supremum   over all the aforementioned respective individual killing times, which is henceforth denoted by $\zeta^x$, is finite.
We say that $\Pi^x$ becomes {\it extinct} if $\{\zeta^x<\infty\}$. 
Note that $\zeta^x>0$ $\mathbb P$--a.s., that is to say almost surely instantaneous extinction is not possible, on account of the fact that the spectrally negative L\'evy process $\{ct +\log |\Pi_1(t)|: t\geq 0\}$ is irregular for $(-\infty,0)$ when issued from the origin.
Our first main result in this respect is the following.

\begin{theorem}\label{p.ext-probab}
For all $c\le c_{\bar p}$ we have $\mathbb P(\zeta^x<\infty)=1$ for every $x\in\rpn$. If $c>c_{\bar p}$, then $x\mapsto\mathbb P(\zeta^x<\infty)$ is a 
  nonincreasing, $(0,1)$--valued function on $\rpn$.
\end{theorem}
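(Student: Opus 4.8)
I would build the proof on two elementary observations. First, because $t\mapsto x+ct$ is increasing, a block $\Pi_n(t)$ with $|\Pi_n(t)|<e^{-(x+ct)}$ must already have satisfied $|\Pi_n(\sigma)|=|\Pi_n(t)|<e^{-(x+c\sigma)}$ at its creation time $\sigma\le t$, hence was killed at birth; therefore, if at some time $t$ every block of $\Pi(t)$ lies below the barrier, i.e.\ $-\log\lambda_1(t)>x+ct$, then $N^x_t=0$. Thus
\[
\{\zeta^x\le t\}\ \supseteq\ \{-\log\lambda_1(t)>x+ct\}\qquad\text{for every }t\ge0 .
\]
Second, realising $\Pi^x$ and $\Pi^{x'}$ for $x<x'$ from the same Poisson point process, a lineage that survives in $\Pi^x$ survives a fortiori in $\Pi^{x'}$, so $\zeta^x\le\zeta^{x'}$ pathwise; in particular $x\mapsto\mathbb P(\zeta^x<\infty)$ is non-increasing, which is one part of the second assertion.

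\emph{Part 1 ($c\le c_{\bar p}$).} When $c<c_{\bar p}$, Proposition~\ref{largestfragmentnotkilled} gives $-\log\lambda_1(t)/t\to c_{\bar p}>c$ almost surely, so $-\log\lambda_1(t)>x+ct$ for all large $t$ and the displayed inclusion yields $\zeta^x<\infty$ a.s. For $c=c_{\bar p}$ the fluctuations of $-\log\lambda_1(t)-c_{\bar p}t$ are of lower order and this direct argument fails; instead I would use the additive martingale $W_{\bar p}(t):=e^{\Phi(\bar p)t}\sum_n|\Pi_n(t)|^{1+\bar p}$. It is a non-negative martingale, and at the critical exponent it is degenerate: $W_{\bar p}(t)\to0$ a.s. (the fragmentation counterpart of the classical branching-random-walk fact; this is a property of the additive family of martingales which is either quoted or proved in the paper by size-biasing). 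On the other hand, since $(1+\bar p)\,c_{\bar p}=\Phi(\bar p)$, any block alive in $\Pi^x$ at time $t$ contributes $e^{\Phi(\bar p)t}|\Pi^x_n(t)|^{1+\bar p}\ge e^{-(1+\bar p)x}$ to $W_{\bar p}(t)$; hence on $\{\zeta^x=\infty\}$ one has $\liminf_t W_{\bar p}(t)\ge e^{-(1+\bar p)x}>0$, contradicting $W_{\bar p}(t)\to0$. So $\mathbb P(\zeta^x<\infty)=1$. (This martingale argument in fact re-proves the case $c<c_{\bar p}$ as well, the contribution then diverging rather than being merely bounded below.)

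\emph{Part 2 ($c>c_{\bar p}$).} For the lower bound, the displayed inclusion gives $\mathbb P(\zeta^x<\infty)\ge\mathbb P(-\log\lambda_1(t)>x+ct)$, and the right-hand side is strictly positive for a suitable $t$: with positive probability the fragmentation disperses within a bounded time to the point where \emph{every} block lies below the exponential barrier (a positivity statement for the upper tail of $-\log\lambda_1(t)$; for $\nu$ finite it is seen by prescribing finitely many dislocation atoms, and in general it is covered by the known large-deviation estimates for the largest fragment). The upper bound $\mathbb P(\zeta^x<\infty)<1$ is the crux. Fix $p\in(\underline p,\bar p)$ with $\Phi'(p)<c$; such $p$ exists because $c>c_{\bar p}=\Phi'(\bar p)$ and $\Phi'$ is continuous and decreasing. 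As $p<\bar p$, the un-killed additive martingale $W_p(t)=e^{\Phi(p)t}\sum_n|\Pi_n(t)|^{1+p}$ is uniformly integrable, and its killed version $W_p^x(t):=e^{\Phi(p)t}\sum_{n\in\mathcal N^x_t}|\Pi^x_n(t)|^{1+p}\le W_p(t)$ is a non-negative supermartingale, uniformly integrable by domination, hence convergent in $L^1$ to $W_p^x(\infty)$. Splitting the un-killed mass along the stopping line of the blocks $B$ that are the first in their lineage to be born below the barrier, the branching–Markov property identifies $e^{\Phi(p)\sigma_B}|B|^{1+p}$ as the conditional expected $W_p$-mass of the subtree of $B$ (and $|B|<e^{-(x+c\sigma_B)}$ together with $c_p\le c_{\bar p}<c$ makes the resulting series summable), whence
\[
\mathbb E\bigl[W_p^x(\infty)\bigr]\;=\;1-\mathbb E\Bigl[\sum_{B\ \text{first-crossing}}e^{\Phi(p)\sigma_B}|B|^{1+p}\Bigr]\;=\;1-\mathbb Q_p\bigl(\text{the spine ever crosses the barrier }x+ct\bigr),
\]
$\mathbb Q_p$ being the law from the $W_p$-change of measure and its spine decomposition. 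Under $\mathbb Q_p$ the logarithmic size of the spine is a subordinator with Laplace exponent $\Phi(p+\cdot)-\Phi(p)$, so it grows at linear rate $\Phi'(p)<c$; being a subordinator it behaves like its (nonnegative) drift for small times and like its mean $\Phi'(p)$ for large times, and so stays strictly below the moving barrier $x+ct$ with probability $(c-\Phi'(p))W(x)$, where $W$ is a scale function of the spectrally negative L\'evy process $t\mapsto ct-\xi^{(p)}_t$ — a bounded-variation process with strictly positive drift, drifting to $+\infty$, so that $W(x)>0$ for every $x\in\rpn$, including $x=0$. Hence $\mathbb E[W_p^x(\infty)]\ge(c-\Phi'(p))W(x)>0$, so $\mathbb P(W_p^x(\infty)>0)>0$; and since $W_p^x\equiv0$ once $\Pi^x$ is extinct, $\{W_p^x(\infty)>0\}\subseteq\{\zeta^x=\infty\}$, giving $\mathbb P(\zeta^x<\infty)<1$.

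The step I expect to be the main obstacle is the displayed identity in Part 2: it requires setting up the spine/change-of-measure machinery for the \emph{killed} fragmentation so that the lost mass is accounted for exactly by the first-crossing stopping line, and then solving the exit problem for the tilted spine against a linear barrier — which is precisely where the scale function (and the second circle of martingale ideas in the paper) enters. Everything else is routine: the large-deviation positivity for $\lambda_1$, the uniform integrability of $W_p$ (a Biggins-type criterion on $(\underline p,\bar p)$), and the degeneracy $W_{\bar p}(\infty)=0$ used in the critical case, which has to be supplied if it is not available off the shelf.
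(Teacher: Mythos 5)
Your reduction $\{\zeta^x\le t\}\supseteq\{-\log\lambda_1(t)>x+ct\}$ and the pathwise coupling argument for monotonicity in $x$ are both correct and clean. For $c\le c_{\bar p}$ your argument is essentially the paper's (Lemma~\ref{t.sc.1}): the paper picks $p\ge\bar p$ with $c_p=c$ and uses that the additive martingale $M_t(p)$ degenerates to $0$ together with the pointwise bound $M_t(p)\ge e^{\Phi(p)t}\lambda_1(t)^{1+p}$; your version fixes $p=\bar p$ and distinguishes ``contribution diverges'' ($c<c_{\bar p}$) from ``contribution stays bounded below'' ($c=c_{\bar p}$). Both work, same ingredient. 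For the bound $\mathbb P(\zeta^x<\infty)<1$ when $c>c_{\bar p}$ your stopping-line/spine computation is sound in spirit, but it is heavier than needed: the paper (Lemma~\ref{positivesurviaval}, Part~I) simply notes that under $\mathbb P^{(p)}$ with $p\in(\underline p,\bar p)$ and $\Phi'(p)<c$ the tagged fragment has $\mathbb P^{(p)}(\tau^-_{1,0}=\infty)=\psi'_p(0+)/c>0$ by~(\ref{8.15}), then transfers to $\mathbb P$ via the mutual absolute continuity of $\mathbb P^{(p)}$ and $\mathbb P$ on $\mathcal G_\infty$ (Remark~\ref{r.equivalentmeasures}). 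Your stopping-line identity $\mathbb E[W^x_p(\infty)]=1-\mathbb Q_p(\text{spine crosses})$, if made rigorous, is close to what the paper develops only later for Theorem~\ref{t.mainresult.2} with the scale-function-weighted martingale $M^x(p)$; it is not needed for Theorem~\ref{p.ext-probab} itself.

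The genuine gap is the opposite inequality, $\mathbb P(\zeta^x<\infty)>0$ for $c>c_{\bar p}$. You reduce this to $\mathbb P(-\log\lambda_1(t)>x+ct)>0$ for some $t$ and then assert that this positivity is ``covered by the known large-deviation estimates for the largest fragment.'' It is not: large-deviation results give decay \emph{rates}, i.e.\ upper bounds of the form $\mathbb P(-\log\lambda_1(t)>vt)\le e^{-I(v)t+o(t)}$, not lower bounds, and in particular they do not rule out $\mathbb P(-\log\lambda_1(t)>x+ct)=0$ outright. Moreover, even the heuristic ``prescribe finitely many dislocation atoms'' does not automatically deliver a positive-probability event, because you must simultaneously control an unbounded (potentially exponentially growing) collection of blocks, and the dislocation measure may well charge only partitions whose largest component is close to $1$, so many nested jumps are required. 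This is precisely where the paper does real work (Lemma~\ref{positivesurviaval}, Part~II): it thins the Poisson point process driving the tagged block so that before the first dislocation with $|\pi_1|\le y_0$ (and before the subordinator's killing time), every side-block created is a fraction at most $e^{-a}$ of its parent and hence is immediately below the barrier when the parent stays in a strip $[0,a]$; it then combines Proposition~2 of \cite{Ber96} with the strict positivity of a two-sided-exit resolvent (Theorem~8.7 of \cite{Kyp06}) to show that, with positive probability, the single surviving block stays in $[0,a]$ and is then pushed below the barrier at the thinned first jump, killing the whole process. You identified the ``displayed identity in Part~2'' as the likely obstacle; in fact that part can be bypassed entirely, and the true obstacle is this positivity of extinction, which your proposal treats as routine but is not.
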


\noindent In the case that extinction does not occur with probability 1, we shall give two qualitative results concerning the evolution of the process on survival. The first result shows that the total number of fragments in the surviving process explodes.
\begin{theorem}\label{l.p.2.2}
Let $c>c_{\bar p}$. Then we have that
\[
\limsup_{t\to\infty}N^x_t=\infty
\]
holds  $\mathbb P(\cdot|\zeta^x=\infty)$--a.s. for any $x\in\rpn$.
\end{theorem}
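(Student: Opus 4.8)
The plan is to argue by contradiction: suppose that on a set of positive $\mathbb P(\cdot\,|\,\zeta^x=\infty)$-probability the quantity $\limsup_{t\to\infty} N^x_t$ is finite. Then on that event there is a (random) integer $M$ and a (random) time $T$ such that $N^x_t \le M$ for all $t\ge T$. The key point is that if the number of surviving blocks stays bounded, then the killed fragmentation process behaves, after time $T$, essentially like a fixed finite collection of at most $M$ independent homogenous fragmentation processes (by the branching/Markov property of $\Pi$ applied at time $T$ to the $N^x_T$ surviving blocks, each restricted to its own block of integers and rescaled), each of which is itself subject to the same exponential-barrier killing. So it suffices to understand a single tagged surviving block and show that, with probability one, either it is eventually killed or it produces, at some later time, a dislocation that creates a second surviving block — i.e. boundedness of $N^x_t$ forces extinction.

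First I would reduce to the one-block statement. Fix attention on one surviving block at time $T$; after relabelling and spatial recentring it is a homogenous fragmentation process $\Pi$ started from the trivial partition, killed below the barrier $e^{-(x'+ct)}$ for the appropriate shifted $x'\ge 0$ (the recentring absorbs the position of the block at time $T$ and the elapsed time $T$). On the event that this block never produces a second surviving block and is never killed, the process $N$-count coming from this block is identically $1$ for all time, which means: at every dislocation time of this block, all but (at most) one of the newly created sub-blocks lie strictly below the barrier and are instantly killed, and the one surviving sub-block is never itself killed. But the surviving sub-block at each dislocation is, by the paint-box construction, not necessarily the one of largest asymptotic frequency — however it is dominated by $\lambda_1$ of the fragmentation restricted to that block, and more importantly the surviving line of descent has log-size process that is bounded below by $x'+ct$ for all $t$. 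Now I invoke Proposition~\ref{largestfragmentnotkilled}: along the surviving line of descent, the largest-fragment log-size grows like $c_{\bar p}\,t$, and since $c>c_{\bar p}$, the barrier $x'+ct$ grows strictly faster. Hence with probability one the surviving line of descent crosses the barrier in finite time, contradicting the assumption that the block survives forever with $N\equiv 1$.

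More carefully, the quantity I want to control is not $\lambda_1$ of the original process but the size of the \emph{single surviving fragment}, which is the $\inf$-type analogue. Here I would use the family of additive (and/or multiplicative) martingales advertised in the abstract and developed earlier in the paper for the killed process. The cleanest route: on $\{\zeta^x=\infty, \limsup N^x_t<\infty\}$ one has $N^x_t$ eventually equal to a constant $M$, so eventually no dislocation of any surviving block creates two surviving sub-blocks; pick the surviving block whose least integer is smallest — its log-size process $\widetilde\xi_t$ is a killed subordinator observed along the surviving line, and survival forces $\widetilde\xi_t < x+ct$ for all $t$, i.e. $\limsup_t (\widetilde\xi_t - ct) \le x < \infty$. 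But the surviving line of descent of the tagged fragment, conditioned to avoid the barrier, still has, by the spine/change-of-measure interpretation of the additive martingale and the strict inequality $c>c_{\bar p}=\sup_{p}\Phi(p)/(p+1)$, an almost-sure linear growth rate strictly exceeding $c$ (this is exactly the content that makes $c_{\bar p}$ the critical speed). This yields $\limsup_t(\widetilde\xi_t-ct)=+\infty$, the desired contradiction.

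The main obstacle I anticipate is the bookkeeping in the reduction step: making precise the claim that "$N^x_t$ bounded $\Rightarrow$ eventually a fixed finite family of independent killed fragmentations each with $N\equiv$ const $\Rightarrow$ along some surviving line no splitting into two survivors ever occurs again." One has to rule out the scenario where the total count stays bounded but survivors keep being created and destroyed in a balanced way along many different lines; this is handled by noting that each \emph{individual} killing time is a stopping time and that on $\{\limsup N^x_t<\infty\}$ only finitely many surviving blocks are ever created (a block and all its surviving descendants form a subtree, and a bounded-width infinite binary-ish tree with no leaves must contain an infinite ray with no branching past some point), so some surviving line of descent persists forever without splitting into two survivors — and then the speed comparison via Proposition~\ref{largestfragmentnotkilled} (or the additive martingale) closes the argument. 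A secondary technical point is that the surviving sub-block at a dislocation need not be the largest, so Proposition~\ref{largestfragmentnotkilled} must be applied to the whole sub-fragmentation rooted at that block (whose largest fragment dominates the surviving one), which is harmless since that only makes the barrier harder to avoid.
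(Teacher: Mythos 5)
Your argument by contradiction has a fatal sign error. On the $-\log$-size scale the barrier is the line $x+ct$, and a block survives precisely when its $-\log$-size stays \emph{below} that line: the killing rule $|\Pi_n(t)|<e^{-(x+ct)}$ reads $-\log|\Pi_n(t)|>x+ct$. In your reduction you assert that a surviving line of descent has $-\log$-size ``bounded below by $x'+ct$'' and then derive a contradiction from $-\log\lambda_1(t)\sim c_{\bar p}t$ with $c_{\bar p}<c$; but the correct constraint is the reverse inequality (bounded \emph{above} by $x'+ct$), and since $c_{\bar p}<c$, a line of descent whose $-\log$-size grows at rate near $c_{\bar p}$ satisfies this constraint forever. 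There is no contradiction in having $N^x\equiv 1$ along such a line. Likewise your later claim that a surviving line ``conditioned to avoid the barrier'' must have rate exceeding $c$ is backwards: under $\mathbb P^{(p)}$ with $p\in(\underline p,\bar p)$ chosen so that $c>\Phi'(p)$, the spine has drift $\Phi'(p)<c$ and avoids the barrier with positive probability. In short, $c>c_{\bar p}$ is exactly the regime in which a single line of descent \emph{can} dodge the barrier indefinitely, so the scenario you are trying to rule out is not impossible, and no contradiction can be extracted this way.

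There is also a secondary gap in the reduction: $\limsup_t N^x_t<\infty$ does not force the existence of a surviving line that eventually never produces a second survivor. Your K\"onig-type argument would need the genealogical tree of survivors to have no leaves, but a surviving block can dislocate into nothing but killed sub-blocks (a leaf), and a bounded-width infinite tree with leaves can branch infinitely often.

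The paper's proof is constructive rather than by contradiction. Lemma~\ref{l.l.p.2.2.1} establishes, via an explicit thinning/resolvent estimate for a single surviving block splitting into two survivors followed by an induction, that for every $n$ there is a $t_0>0$ with $\mathbb P(N^0_{t_0}\ge n)>0$. One then partitions time into epochs of length $t_0$, uses the fragmentation property and monotonicity in $x$ to obtain a uniform lower bound $\mathbb P(E_m\mid\mathcal F_{(m-1)t_0})\ge\mathbb P(N^0_{t_0}\ge k)>0$ on the survival event, and applies the extended Borel--Cantelli lemma to conclude that $\{N^x_{mt_0}\ge k\}$ occurs for infinitely many $m$ a.s.\ on $\{\zeta^x=\infty\}$. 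Since $k$ is arbitrary, $\limsup_t N^x_t=\infty$ a.s.\ on survival. The essential ingredient, which your proposal does not attempt, is this positive-probability branching estimate.
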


The second result shows that the asymptotic exponential rate of decay of the largest fragment,
\[
\lambda^x_1(t): = \max_{n\in\N}|\Pi_n^x(t)|, \, t\geq 0,
\] 
is the same  as when the killing scheme is not in effect, cf. Proposition~\ref{largestfragmentnotkilled}.

\begin{theorem}\label{l.fp.1}
Let $c>c_{\bar p}$ and $x\in\rpn$. Then we have
\[
\lim_{t\to\infty}\frac{-\log\lambda^x_1(t)}{t}=c_{\bar p}
\]
$\mathbb P(\cdot|\zeta^x=\infty)$--almost surely.
\end{theorem}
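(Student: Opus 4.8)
The plan is to prove the two bounds separately. \emph{Lower bound.} Since the alive blocks of $\Pi^x$ always form a sub-collection of the blocks of $\Pi$, we have $\lambda^x_1(t)\le\lambda_1(t)$ for every $t\ge0$, so $\frac{-\log\lambda^x_1(t)}{t}\ge\frac{-\log\lambda_1(t)}{t}$, and Proposition~\ref{largestfragmentnotkilled} gives $\liminf_{t\to\infty}\frac{-\log\lambda^x_1(t)}{t}\ge c_{\overline p}$ $\mathbb P$--a.s., hence also $\mathbb P(\,\cdot\mid\zeta^x=\infty)$--a.s. All remaining effort goes into the matching upper bound on $\{\zeta^x=\infty\}$.

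\emph{Reduction.} As $\Phi'$ is continuous with $\Phi'(\overline p)=\Phi(\overline p)/(\overline p+1)=c_{\overline p}<c$, fix $q_k\uparrow\overline p$, $q_k<\overline p$, so that $\Phi'(q_k)\downarrow c_{\overline p}$ and $\Phi'(q_k)<c$ for every $k$, and put $E_k:=\{\limsup_{t\to\infty}\frac{-\log\lambda^x_1(t)}{t}\le\Phi'(q_k)\}$. Because $\lambda^x_1(t)=0$ for all large $t$ on $\{\zeta^x<\infty\}$, we have $E_k\subseteq\{\zeta^x=\infty\}$, so it is enough to show $\{\zeta^x=\infty\}\subseteq E_k$ up to a $\mathbb P$--null set for each $k$ and then intersect over $k$. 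For $r\ge0$ let $g_k(r)$ be the probability that the fragmentation killed at the barrier $r+ct$ has its largest fragment decaying at exponential rate at most $\Phi'(q_k)$; then $g_k$ is non-decreasing, $g_k(x)=\mathbb P(E_k)$, and $g_k(r)<1$ by Theorem~\ref{p.ext-probab}.

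\emph{Spine estimate: $g_k(0)>0$.} Consider the additive martingale $M_{q_k}(t)=e^{\Phi(q_k)t}\sum_{n\in\N}|\Pi_n(t)|^{1+q_k}$ of the (non-killed) fragmentation, which is well known to be uniformly integrable for $q_k<\overline p$; hence the change of measure $\mathrm d\widetilde{\mathbb P}^{(q_k)}/\mathrm d\mathbb P\big|_{\mathcal F_t}=M_{q_k}(t)$ satisfies $\widetilde{\mathbb P}^{(q_k)}\ll\mathbb P$ on $\mathcal F_\infty$. Under $\widetilde{\mathbb P}^{(q_k)}$ there is a distinguished spine block whose negative log--asymptotic frequency $\xi^{\mathrm{sp}}$ is a (non-killed) subordinator with Laplace exponent $\Phi(q_k+\,\cdot\,)-\Phi(q_k)$, so that $\xi^{\mathrm{sp}}_t/t\to\Phi'(q_k)$ $\widetilde{\mathbb P}^{(q_k)}$--a.s. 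Since $\Phi'(q_k)<c$, the process $t\mapsto\xi^{\mathrm{sp}}_t-ct$ is spectrally positive, of bounded variation with strictly negative drift, and drifts to $-\infty$; thus $0$ is irregular for $(0,\infty)$ and $\widetilde{\mathbb P}^{(q_k)}\big(\sup_{t\ge0}(\xi^{\mathrm{sp}}_t-ct)\le0\big)>0$. On that event the spine block stays below the barrier $ct$ forever, hence is never killed, and has frequency $e^{-\xi^{\mathrm{sp}}_t}=e^{-(\Phi'(q_k)+o(1))t}$, so the largest fragment of the process killed at barrier $ct$ is $\ge e^{-\xi^{\mathrm{sp}}_t}$ and decays at rate $\le\Phi'(q_k)$. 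Transferring via $\widetilde{\mathbb P}^{(q_k)}\ll\mathbb P$ gives $g_k(0)>0$.

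\emph{From positive probability to a.s.\ survival, and conclusion.} Fix $k$ and $s\ge0$. By the branching property of $\Pi$, conditionally on $\mathcal F_s$ the sub-fragmentations rooted at the finitely many alive blocks $m\in\mathcal N^x_s$ evolve independently, the $m$-th one killed at the barrier $r_m(s)+c\tau$ with $r_m(s):=x+cs+\log|\Pi_m(s)|\ge0$; and whenever some sub-fragmentation has its largest fragment decaying at rate at most $\Phi'(q_k)$, then (shifting time by $s$) so does $\Pi^x$. Hence
\[
\mathbb P(E_k\mid\mathcal F_s)\ \ge\ 1-\prod_{m\in\mathcal N^x_s}\bigl(1-g_k(r_m(s))\bigr)\ \ge\ 1-\bigl(1-g_k(0)\bigr)^{N^x_s}.
\]
On $\{\zeta^x=\infty\}$, Theorem~\ref{l.p.2.2} yields $\limsup_{s\to\infty}N^x_s=\infty$, so $\limsup_{s\to\infty}\mathbb P(E_k\mid\mathcal F_s)=1$ there; since $E_k\in\mathcal F_\infty$, L\'evy's $0$--$1$ law gives $\mathbb P(E_k\mid\mathcal F_s)\to\mathbf 1_{E_k}$ $\mathbb P$--a.s., whence $\mathbf 1_{E_k}=1$ on $\{\zeta^x=\infty\}$. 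Intersecting over $k$ and using $\Phi'(q_k)\downarrow c_{\overline p}$ completes the proof. \emph{The main obstacle} is the spine estimate: correctly identifying $\xi^{\mathrm{sp}}$ as a genuine subordinator of mean rate $\Phi'(q_k)$ under the additive-martingale tilt, checking uniform integrability of $M_{q_k}$, and---most delicately when $x=0$ or $\xi^{\mathrm{sp}}$ has zero drift---showing that the spine stays below the barrier with positive probability, which is exactly where the strict inequality $c>c_{\overline p}$ is used; a lesser technical point is making rigorous the assertion that $E_k$ occurs as soon as one sub-fragmentation's largest block decays at rate $\le\Phi'(q_k)$ (controlling the $\limsup$ under the time-shift and the possible extinction of sub-fragmentations).
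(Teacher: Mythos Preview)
Your proof is correct, but it follows a genuinely different route from the paper's.

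\textbf{Lower bound.} For the inequality $\liminf_{t\to\infty}\frac{-\log\lambda^x_1(t)}{t}\ge c_{\bar p}$ you simply compare with the non-killed process via $\lambda^x_1(t)\le\lambda_1(t)$ and invoke Proposition~\ref{largestfragmentnotkilled}. The paper instead bounds $(\lambda^x_1(t))^{1+p}$ from above by a constant times $e^{-\Phi(p)t}M^x_t(p)$, using the scale-function-weighted martingale $M^x(p)$ of Section~\ref{s.am}, and optimises over $p$. Your argument is shorter for this direction.

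\textbf{Upper bound.} Here the two proofs diverge substantially. The paper's route runs through Sections~\ref{s.mm} and~\ref{s.am}: it introduces the killed additive martingale $M^x(p)$, proves via the multiplicative-martingale characterisation (Theorem~\ref{t.p.t.1.2.1.T}) that $M^x_\infty(p)>0$ on $\{\zeta^x=\infty\}$ (Theorem~\ref{t.mainresult.2}), and then compares $M^x_t(p)$ with $[\lambda^x_1(t)]^\epsilon M^x_t(p-\epsilon)$ to extract the rate $\Phi'(p)$. Your argument bypasses all of that: you first establish the \emph{positive-probability} statement $g_k(0)>0$ by a tagged-fragment/spine computation under $\mathbb P^{(q_k)}$ (this is essentially the calculation in Part~I of the proof of Lemma~\ref{positivesurviaval}, combined with the SLLN for the tilted subordinator and the equivalence of measures from Remark~\ref{r.equivalentmeasures}), and then upgrade to almost-sure on survival via Theorem~\ref{l.p.2.2} and L\'evy's $0$--$1$ law.

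\textbf{What each approach buys.} Your proof is more self-contained for the purpose of Theorem~\ref{l.fp.1}: it uses only results from Sections~\ref{preliminaries}--\ref{proof of l.p.2.2} and standard spine/L\'evy facts, avoiding the multiplicative-martingale uniqueness (Theorem~\ref{t.p.t.1.2.1.T}) and the killed additive martingale $M^x(p)$ altogether. The paper's approach, on the other hand, develops $M^x(p)$ and its strict positivity on survival as an object of independent interest (e.g.\ for the FKPP connection alluded to in the introduction), and then obtains Theorem~\ref{l.fp.1} as a short corollary. One minor point worth tightening in your write-up: the event you use to seed $g_k(0)>0$ lives on the spine-enriched space, so you should state explicitly that its $\mathcal G_\infty$-measurable consequence $E_k^{(0)}$ is what gets transferred via Remark~\ref{r.equivalentmeasures}; the paper handles the analogous measurability issue at the end of the proof of Lemma~\ref{p.2}.
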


What lies fundamentally behind the proofs of our main results is a detailed study of the interaction between two classes of martingales. 

The outline of this paper is as follows. In the next section we provide some general notions that are used in the subsequent parts of the present paper and in particular we employ the connection between fragmentations and L\'evy processes. Section~\ref{s.properties} is concerned with the proof of Theorem~\ref{p.ext-probab} and in Section~\ref{proof of l.p.2.2} we provide the proof of Theorem~\ref{l.p.2.2} . Then, in Section~\ref{s.mm}, we introduce a multiplicative process and examine when this process is a martingale. The object under consideration in Section~\ref{s.am}  is an additive process which also turns out to be a martingale and whose limit we study with regard to strict positivity. In the final section of this paper we prove Theorem~\ref{l.fp.1}. 

\section{Preliminaries}\label{preliminaries}

Let $B_n(t)$, $t\in\rpn$, denote the block in $\Pi(t)$ that contains the element $n\in\N$ 
and 
recall from (\ref{e.kappa}) that under $\mathbb P$ the process $\xi_n=(-\log|B_n(t)|)_{t\in\rpn}$ is a killed
subordinator (with cemetery state $+\infty$ and killing rate $\kappa$).
\begin{definition}
For every $n\in\N$ let the process $X_n:=(X_n(t))_{t\in\rpn}$\label{p.Xn} be defined by 
\[
X_n(t):=ct-\xi_n(t)
\] 
for all $t\in\rpn$.
\end{definition} 

Notice that under $\mathbb P$ the dynamics of the process $X_n$ are those of a killed
spectrally negative L\'evy process of bounded variation (with cemetery state $-\infty$ and killing rate $\kappa$). 
Moreover, the jump times of $X_n$, henceforth denoted by the countably infinite set, 
$\mathcal{I}_n\subseteq\rp$, 
are the set of dislocation times of $(B_n(t))_{t\in\rpn}$. That is, $X_n$ jumps exactly when the subordinator $\xi_n$ jumps. 
For any $n\in\N$ and $x\in\rpn$ consider the following $\mathcal F$--stopping times:
\[
\tau^+_{n,x}:=\inf\{t\in\rpn:X_n(t)>x\}\qquad\text{as well as}\qquad
\tau^-_{n,x}:=\inf\{t\in\rpn:X_n(t)<-x\}.\label{p.tau^x^n_0}
\]

For any $p\in(\underline p,\infty)$ consider the change of measure given by
\begin{equation}
\left.\frac{\dd\mathbb P^{(p)}}{\dd\mathbb P}\right|_{\mathcal F_t}=e^{\Phi(p)t-p\xi(t)}=e^{pX_1(t)-\psi(p)t},
\label{1DCOM}
\end{equation}
where 
\[
\psi(p)=\frac{1}{t} \log \mathbb{E}(e^{p X_1(t)}) = cp-\Phi(p)  
\]
is the Laplace exponent of $X_1$.
Moreover, considering the projection of  (\ref{1DCOM}) onto the sub--filtration $\mathcal G$ results in
\[
\left.\frac{\dd\mathbb P^{(p)}}{\dd\mathbb P}\right|_{\mathcal G_t}
=M_t(p): = \sum_{n\in\mathbb{N}} |\Pi_n(t)|^{1+p}e^{\Phi(p)t},
\]
for all $p\in(\underline p,\infty)$ and $t\in\rpn$. 
\begin{rem}\rm\label{r.equivalentmeasures}
Let $p\in(\underline p,\bar p)$ and denote by $M_\infty(p)$ the $\mathbb P$--a.s. limit of the nonnegative martingale $M(p):=(M_t(p))_{t\in\rpn}$. According to Theorem~1 of \cite{BR03} (cf. also Theorem~4 of \cite{BR05} for the conservative case) the unit--mean martingale $M(p)$ is uniformly integrable. Hence, $\mathbb E(M_\infty(p))=1$ and thus $\mathbb P^{(p)}$ is a probability measure on $\mathcal G_\infty:=\bigcup_{t\in\rpn}\mathcal G_t$.
Moreover, using that $\mathbb E(M_\infty(p))>0$ one obtains that $M_\infty(p)>0$ $\mathbb P$--a.s., see Lemma~1.35 of \cite{Kno11} (or Theorem 2 of  \cite{Be4} for the conservative case). Consequently, restricted to the $\sigma$--algebra $\mathcal G_\infty$, the measures $\mathbb P^{(p)}$ and $\mathbb P$ are equivalent. 
\end{rem}
Corollary~3.10 in \cite{Kyp06} shows that under the measure $\mathbb P^{(p)}$ the process $X_1$ is again a spectrally negative L\'evy process such that 
\begin{equation}
\psi_p(\lambda):= \frac{1}{t}\log\mathbb{E}^{(p)}(e^{\lambda X_1(t)}) = \psi(\lambda+p) - \psi(p) = c\lambda - \Phi(\lambda+p)+\Phi(p)
\label{diff-this}
\end{equation}
for all $\lambda >\underline{p}-p$. Let $W_p$\label{p.W_p} be the scale function of the spectrally negative L\'evy process $X_1$ under  $\mathbb P^{(p)}$. That is to say,  $W_p$ is the unique increasing and continuous function on $(0,\infty)$ that is defined through the Laplace transform
\[
\int_0^\infty e^{-\lambda x}W_p(x){\rm d}x=\frac{1}{\psi_p(\lambda)},
\]
for all  $\lambda>p-\underline{p}$. 

A fundamental identity involving the scale function $W_p$ that we shall appeal to later is the following result taken from Theorem 8.1, equation (8.7), in  \cite{Kyp06}:
\begin{equation}
\mathbb{P}^{(p)}(\tau^-_{1,x}=\infty) = (\psi'_p(0+)\vee 0)W_p(x)
\label{8.15}
\end{equation}
for all $x>0$.
Another important fact that we shall also use  concerns the value of $W_p$ at zero. Indeed, thanks to the fact that $X_1$ has paths of bounded variation, it turns out that for all $p\geq 0$, $W_p(0+) = 1/c$.
See for example Lemma~8.6 in \cite{Kyp06}.

An important role in what follows will be played by $X_n$ killed upon hitting $(-\infty,-x)$ for $n\in\N$ and $x\in\rpn$. 
For $t\in\rpn
$ set 
\[
X^x_n(t):=(X_n(t)+x)\mathds1_{\{\tau^-_{n,x}>t\}}=\left(x+ct+\log|B_n(t)|\right)\mathds1_{\{\tau^-_{n,x}>t\}}. 
\]

\section{Properties of the extinction probability}\label{s.properties}

In this section we prove Theorem~\ref{p.ext-probab} by dealing  with the cases $c\in(0, c_{\bar p}]$ and $c>c_{\bar p}$ as two separate lemmas. The first lemma below deals with the easier, but less interesting, case that $c\in(0,c_{\bar p}]$.

\begin{lemma}\label{t.sc.1}
Let $c\in(0,c_{\bar p}]$. Then $\mathbb P(\zeta^x<\infty)=1$ for all $x\in\rpn$.
\end{lemma}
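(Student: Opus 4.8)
The plan is to prove that $\mathbb{E}(N^x_t)\to 0$ as $t\to\infty$ and then to conclude by a first--moment argument. First I would record the elementary fact that, once $N^x_t=0$, the killed process can never create another block, so the events $\{N^x_t=0\}$ increase in $t$; consequently $\{\zeta^x=\infty\}=\bigcap_{t\ge 0}\{N^x_t\ge 1\}$ is a decreasing intersection and, by continuity from above together with Markov's inequality,
\[
\mathbb{P}(\zeta^x=\infty)=\lim_{t\to\infty}\mathbb{P}(N^x_t\ge 1)\le\limsup_{t\to\infty}\mathbb{E}(N^x_t).
\]
Thus it suffices to show $\mathbb{E}(N^x_t)\to 0$.

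To compute $\mathbb{E}(N^x_t)$ I would pass to the tagged fragment via a many--to--one identity. A block $\Pi_i(t)$ with least element $m$ coincides, as a set, with the block $B_m(t)$ containing $m$, and it has escaped killing up to time $t$ exactly when $|B_m(s)|\ge e^{-(x+cs)}$, i.e.\ $X_m(s)\ge -x$, at every dislocation time $s\le t$ of $B_m$; since $X_m(t)=ct-\xi_m(t)$ is spectrally negative of bounded variation it can only pass below $-x$ at a dislocation time, so ``$\Pi_i(t)$ has not been killed up to time $t$'' is precisely the event $\{\tau^-_{m,x}>t\}$ (in particular the block is then not dust, as $|B_m(t)|\ge e^{-(x+ct)}>0$). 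Writing $\mathds1_{\{\Pi_i(t)\text{ not killed}\}}=|\Pi_i(t)|\cdot\bigl(|\Pi_i(t)|^{-1}\mathds1_{\{\Pi_i(t)\text{ not killed}\}}\bigr)$ and applying the many--to--one formula for homogeneous fragmentations --- which replaces $\sum_i|\Pi_i(t)|\,F\bigl((\Pi_i(s))_{s\le t}\bigr)$ by $\mathbb{E}\bigl(F\bigl((\Pi_1(s))_{s\le t}\bigr)\bigr)$, the tagged block $\Pi_1$ being the size--biased one; see Chapter~3 of \cite{bertfragbook} --- one gets
\[
\mathbb{E}(N^x_t)=\mathbb{E}\bigl(|\Pi_1(t)|^{-1}\mathds1_{\{\tau^-_{1,x}>t\}}\bigr)=\mathbb{E}\bigl(e^{\xi_1(t)}\mathds1_{\{\tau^-_{1,x}>t\}}\bigr).
\]

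It then remains to estimate this expectation under the tilted measure. Using $e^{\xi_1(t)}=e^{ct}e^{-X_1(t)}$, the change of measure (\ref{1DCOM}) with $p=\bar p$ gives
\[
\mathbb{E}(N^x_t)=e^{(c+\psi(\bar p))t}\,\mathbb{E}^{(\bar p)}\bigl(e^{-(1+\bar p)X_1(t)}\mathds1_{\{\tau^-_{1,x}>t\}}\bigr)\le e^{(1+\bar p)x}\,e^{(c+\psi(\bar p))t}\,\mathbb{P}^{(\bar p)}(\tau^-_{1,x}>t),
\]
where the inequality uses $X_1(t)\ge -x$ on $\{\tau^-_{1,x}>t\}$. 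Since $c+\psi(\bar p)=c(1+\bar p)-\Phi(\bar p)=(1+\bar p)(c-c_{\bar p})\le 0$, the exponential factor is at most $1$, and by (\ref{8.15}),
\[
\lim_{t\to\infty}\mathbb{P}^{(\bar p)}(\tau^-_{1,x}>t)=\mathbb{P}^{(\bar p)}(\tau^-_{1,x}=\infty)=\bigl(\psi'_{\bar p}(0+)\vee 0\bigr)W_{\bar p}(x)=0,
\]
because (\ref{diff-this}) and the defining relation $(\bar p+1)\Phi'(\bar p)=\Phi(\bar p)$, which forces $\Phi'(\bar p)=c_{\bar p}$, yield $\psi'_{\bar p}(0+)=c-\Phi'(\bar p)=c-c_{\bar p}\le 0$. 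Hence $\mathbb{E}(N^x_t)\to 0$, which finishes the proof.

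The hard part will be making the reduction to the tagged fragment rigorous: one must justify the many--to--one formula for the \emph{path} functional ``the ancestral line of block $i$ stays above the space--time barrier up to time $t$'' (not merely for a function of the block size at time $t$), deal with the dust blocks (which are killed at their creation and contribute nothing), and check that for the tagged block this functional equals $\{\tau^-_{1,x}>t\}$, which uses that $\Pi_1(t)$ is always the block containing $1$ and that $X_1$ crosses downward levels only by jumps. Everything after that identity is the short change--of--measure computation above together with the already quoted overshoot identity (\ref{8.15}); in particular the argument does not need Proposition~\ref{largestfragmentnotkilled}.
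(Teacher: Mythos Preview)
Your argument is correct, but it proceeds along quite different lines from the paper's proof.

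The paper works almost surely: it picks $p\ge\bar p$ with $c_p=c$, invokes the Bertoin--Rouault result that the additive martingale $M_t(p)=\sum_n|\Pi_n(t)|^{1+p}e^{\Phi(p)t}$ converges to $0$ $\mathbb P$--a.s.\ for such $p$, and reads off from the trivial bound $M_t(p)\ge e^{\Phi(p)t}\lambda_1(t)^{1+p}$ that $ct+\log\lambda_1(t)\to-\infty$, forcing every block eventually below the barrier. By contrast, you run a first--moment argument: the many--to--one identity collapses $\mathbb E(N^x_t)$ onto the tagged line, and the Esscher tilt at $p=\bar p$ reduces the question to $\mathbb P^{(\bar p)}(\tau^-_{1,x}=\infty)=0$, which follows from (\ref{8.15}) and $\psi'_{\bar p}(0+)=c-c_{\bar p}\le 0$. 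Your route avoids citing the $L^1$--degeneracy of $M(p)$ for $p\ge\bar p$ and instead uses only the many--to--one formula (which the paper records later as Lemma~\ref{l.many-to-one}) together with the one--dimensional fluctuation identity (\ref{8.15}); it also yields the quantitative bound $\mathbb E(N^x_t)\le e^{(1+\bar p)x}e^{(1+\bar p)(c-c_{\bar p})t}\,\mathbb P^{(\bar p)}(\tau^-_{1,x}>t)$. The paper's approach is shorter once the Bertoin--Rouault input is granted, and it gives the stronger almost--sure statement $ct+\log\lambda_1(t)\to-\infty$ rather than just $\mathbb P(\zeta^x<\infty)=1$. The caveats you flag (path--functional many--to--one, dust blocks, identification of the survival event with $\{\tau^-_{1,x}>t\}$) are all genuine but routine, and are handled exactly by Lemma~\ref{l.many-to-one} and the bounded--variation observation you make.
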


\begin{proof}  Using stochastic monotonicity it suffices to consider the case that $c = c_{\bar p} =\Phi({\bar p})/(1+{\bar p})$.
 It was shown in Theorem~4 in \cite{BR05} (cf. also Theorem~1 in \cite{BR03}) that $M_t(\bar p)\to0$ $\mathbb P$--a.s. as $t\to\infty$.  Since $M_t(\bar p)\ge e^{\Phi(\bar p)t}\lambda^{1+{\bar p}}_1(t)
$ for all $t\in\rpn$, we thus deduce that 
\[
\left(c_{\bar p}t+\log(\lambda_1(t))\right)\to-\infty
\]
as $t\to\infty$ and hence $\mathbb P(\zeta^x<\infty)=1$ for all $x\in\rpn$. 
\rule{1cm}{0cm}
\end{proof}

Notice that the statement of the previous lemma is obvious for $c\in(0,c_{\bar p})$ as the asymptotic decay of the largest fragment in the non--killed setting is given by $c_{\bar p}$, see Proposition~\ref{largestfragmentnotkilled}, and thus the fragmentation process eventually crosses the killing line almost surely. However, for the critical value $c=c_{\bar p}$ this argument does not work as one needs to rule out the possibility that the largest fragment could approach the killing line without intersecting it.

The following result deals with the more interesting case that $c>c_{\bar p}$. 

\begin{lemma}\label{positivesurviaval}
Let $c>c_{\bar p}$. Then 
\[
\mathbb P(\zeta^x<\infty)\in(0,1)
\] 
for all $x\in\rpn$.
\end{lemma}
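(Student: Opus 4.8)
The plan is to establish the two strict inequalities $\mathbb P(\zeta^x<\infty)<1$ and $\mathbb P(\zeta^x<\infty)>0$ by separate arguments. For the first (positive survival probability), the idea is to run the killed analogue of the additive martingale of Remark~\ref{r.equivalentmeasures} alongside the tagged fragment. For $p\in(\underline p,\infty)$ set
\[
M^x_t(p):=\sum_{n\in\mathcal N^x_t}|\Pi_n(t)|^{1+p}e^{\Phi(p)t},\qquad t\in\rpn .
\]
I would first record that $M^x(p)$ is a nonnegative $\mathcal G$--supermartingale with $M^x_0(p)=1$ and $M^x_t(p)\le M_t(p)$ for all $t$: conditioning on $\mathcal G_s$ and using the branching and self--similarity properties of $\Pi$, the total $M(p)$--mass carried at time $t$ by the \emph{alive} descendants of a block alive at time $s$ has conditional mean at most its value at time $s$, since discarding the killed sub-blocks only removes mass from the martingale continuation. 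Next, fix $p\in[0,\bar p)$ close enough to $\bar p$ that $\psi'(p)=c-\Phi'(p)>0$, which is possible since $\Phi'$ is continuous and decreasing with $\Phi'(\bar p)=\Phi(\bar p)/(\bar p+1)=c_{\bar p}<c$. For such $p$ the martingale $M(p)$ is uniformly integrable by Remark~\ref{r.equivalentmeasures}, hence so is the dominated family $\{M^x_t(p)\}_{t\ge0}$, and $M^x_t(p)\to M^x_\infty(p)$ $\mathbb P$--a.s. and in $L^1$. A tagged--fragment (size--biasing) identity together with the change of measure~(\ref{1DCOM}) gives, for every $t$,
\[
\mathbb E\big[M^x_t(p)\big]=\mathbb E\big[|\Pi_1(t)|^{p}e^{\Phi(p)t}\,\mathds1_{\{\tau^-_{1,x}>t\}}\big]=\mathbb P^{(p)}\big(\tau^-_{1,x}>t\big),
\]
whence, by~(\ref{8.15}) and the fact that $W_p(x)\ge W_p(0+)=1/c>0$,
\[
\mathbb E\big[M^x_\infty(p)\big]=\mathbb P^{(p)}\big(\tau^-_{1,x}=\infty\big)=\psi'(p)\,W_p(x)>0 .
\]
On $\{\zeta^x<\infty\}$ we have $N^x_t=0$ for all large $t$, so $M^x_\infty(p)=0$ there; hence $\{M^x_\infty(p)>0\}\subseteq\{\zeta^x=\infty\}$, and positivity of the mean forces $\mathbb P(\zeta^x=\infty)>0$.

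For positive extinction probability the idea is to exhibit an explicit positive-probability event on which $\Pi^x$ dies out. Fix $a_0\in(0,\log 2)$ with $\nu(\{s_1<e^{-a_0}\})>0$; such an $a_0$ exists because $\{s_1<e^{-a}\}\uparrow\{s_1<1\}$ as $a\downarrow0$ and $\nu(\{s_1<1\})=\nu(\mathcal S)>0$ by~(\ref{e.levymeasure.0}). Call a dislocation of the block of $1$ \emph{dispersive} if its paintbox parameter has $s_1<e^{-a_0}$; these are applied to the block of $1$ at the finite positive rate $r:=\nu(\{s_1<e^{-a_0}\})$. If $x<a_0$, then with $\delta:=\big(\min\{a_0-x,\,-\log(1-e^{-a_0})-x\}\big)/(2c)>0$ one checks the following: before the first dispersive dislocation, every block spun off at a (necessarily non--dispersive) dislocation has frequency at most $(1-e^{-a_0})$ times its parent's and therefore lies below the barrier at its birth time (which is $\le\delta$), so at most one block is alive at any time $t\le\delta$; and as soon as a dispersive dislocation is applied to it — or it is otherwise killed — this last block, together with all fragments then produced, lies below the barrier. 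Since the first such event is dominated by an $\mathrm{Exp}(r)$--distributed time, $\mathbb P(\zeta^x<\infty)\ge1-e^{-r\delta}>0$. For general $x$ I would induct on $\lceil x/a_0\rceil$: with a (tiny but positive) probability, and within a short time window, a burst of dispersive dislocations along every lineage reduces $\Pi^x$ to at most $K:=e^{x+c}$ independent killed fragmentations, each with barrier parameter below $\big(\lceil x/a_0\rceil-\tfrac12\big)a_0$; by the branching property and the inductive hypothesis, carried in the reinforced form $\gamma:=\inf_{y\le(\lceil x/a_0\rceil-1/2)a_0}\mathbb P(\zeta^y<\infty)>0$, the conditional probability of simultaneous extinction of these sub-processes is at least $\gamma^{K}>0$.

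The essential difficulty is the extinction half, and within it the general-$x$ induction: one must bound and then handle the finitely many blocks that survive the initial dispersive burst, counteract the upward drift $ct$ in each lineage's barrier parameter by confining the burst to a short time window, and deal with a possibly infinite dislocation measure $\nu$, for which the block of $1$ fragments immediately and infinitely often — so that ``the first dislocation'' must everywhere be replaced by ``the first dispersive dislocation''. The survival half, by contrast, is essentially automatic once the supermartingale property of $M^x(p)$, its domination by the uniformly integrable $M(p)$, and the identity $\mathbb E[M^x_t(p)]=\mathbb P^{(p)}(\tau^-_{1,x}>t)$ are in hand.
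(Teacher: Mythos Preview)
Your Part I is correct and takes a somewhat different route from the paper. The paper simply observes that $\mathbb P^{(p)}(\tau^-_{1,0}=\infty)>0$ via (\ref{8.15}) for suitable $p\in(\underline p,\bar p)$, uses the inclusion $\{\tau^-_{1,0}=\infty\}\subseteq\{\zeta^x=\infty\}$, and then passes from $\mathbb P^{(p)}$ to $\mathbb P$ via the measure equivalence of Remark~\ref{r.equivalentmeasures}. Your argument instead computes $\mathbb E[M^x_\infty(p)]$ directly, using the many-to-one identity and domination by the uniformly integrable $M(p)$. Both work; the paper's is shorter, while yours yields the explicit limiting mean $\psi'(p)W_p(x)$. (Be aware that the paper reserves the notation $M^x_t(p)$ for a \emph{different} process in Section~\ref{s.am}, namely the one with the scale function $W_p$ inserted.)

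Your Part II has a real gap. You define a dislocation to be \emph{dispersive} when $s_1<e^{-a_0}$, i.e.\ when the \emph{largest} piece is small, and you take $r:=\nu(\{s_1<e^{-a_0}\})$ as the rate at which such dislocations hit the block of~$1$. It is true that in a non-dispersive dislocation the pieces with frequencies $s_2,s_3,\ldots$ all lie below $1-e^{-a_0}$ and are therefore killed, so at most one block survives. But that surviving block is the \emph{largest} piece, which need not contain the integer~$1$: in a paint-box sampling, $1$ may well land in one of the small pieces and be killed, while the block with frequency $s_1$ survives with a new least element $m>1$. From that moment on, dispersive dislocations to position~$1$ are irrelevant; you would have to track dispersive dislocations to the \emph{dynamically changing} surviving block, and the claim that its first dispersive hit is $\mathrm{Exp}(r)$ then requires a separate strong-Markov argument rather than the straightforward PPP thinning you invoke. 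The induction step (``a burst of dispersive dislocations along every lineage'') is also too vague to assess: one must control how many sub-blocks survive each burst and bound their barrier parameters uniformly, and this is not written out.

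The paper sidesteps both difficulties with a single estimate valid for all $x$ simultaneously, by thresholding on $|\pi_1|$ (the frequency of the piece containing~$1$) rather than on $s_1$. Fixing $a>x$ and $y_0\in(\tfrac12\vee(1-e^{-a}),1)$ with $q:=\mu(\pi:|\pi_1|\le y_0)\in(0,\infty)$, before the first time $\tau(y_0)$ the block of~$1$ stays above $y_0$, so every other block produced has frequency $\le 1-y_0\le e^{-a}$ and is killed at birth. On the event that $X^x_1$ remains in $[0,a]$ up to $\tau(y_0)$ and satisfies $X^x_1(\tau(y_0)-)\in[0,-\log y_0)$, the block of~$1$ is the unique survivor and is itself killed at time $\tau(y_0)$. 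The probability of this event is bounded below by the two-sided resolvent $qR^{(q+\kappa)}(a,x,[0,-\log y_0))>0$. No induction on $x$ is needed.
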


\begin{proof}
The proof is divided into two parts. The first part shows that $\mathbb P(\zeta^x<\infty)<1$ and the second part proves that $\mathbb P(\zeta^x<\infty)>0$ for all $x\in\rpn$.

\underline{Part I}.
Note that $c>c_{\bar{p}} = \Phi'(\bar{p})$ and hence, since $\Phi'$ is continuous, we may always choose $p\in(\underline p,\bar p)$ such that $c>\Phi'(p)$. In that case  $\psi_p'(0+)  = \psi'(p) = c- \Phi'(p)>0$. 
Hence, by means of the nondecreasingness of $\mathbb P(\zeta^{(\cdot)}=\infty)$, we deduce from (\ref{8.15}) 
that 
\[
\mathbb P^{(p)}(\zeta^x=\infty)\ge\mathbb P^{(p)}(\tau^-_{1,0}=\infty)=\psi_p'(0+)W_p(0+) = \frac{\psi_p'(0+)}{c}\in(0,1)
\]
for all $x\in\rpn$. According to Remark~\ref{r.equivalentmeasures} this results in 
\[
\label{e.positivesurviaval.1}
\mathbb P(\zeta^x=\infty)>0,\qquad\text{i.e.}\qquad\mathbb P(\zeta^x<\infty)<1.
\]

\underline{Part II}.
Let $x\in\rpn$. In order to show that $\mathbb P(\zeta^x<\infty)>0$ we fix some $a>x$ and some $y_0\in(\nicefrac{1}{2}\lor(1-e^{-a}),1)$ such that 
\[
q:=\mu(\pi\in\mathcal P:|\pi_1|\in(0, y_0])\in(0,\infty).
\] 
The last inclusion is possible since, on the one hand, $\mu(\pi\in\mathcal P:|\pi_1|\in(0,y_0]) = \mu(\pi\in\mathcal{P}: -\log |\pi_1|\geq -\log y_0)$ is the tail of the jump measure of the subordinator $\xi$ which is necessarily finite.
On the other hand, there exists some $x\in(0,1)$ such that 
\[
\mu(\pi\in\mathcal P:|\pi_1|\in(0,x])>0,
\]
as otherwise the L\'evy measure of $\xi$ has no mass in $(0,\infty)$ which contradicts the fact that $\xi$ is a subordinator.


Recall that $\{\pi(t): t\in\mathcal I_1\}$ are the atoms of the Poisson point process on $\mathcal{P}$ that determines $\xi$. Further, denote the (possibly infinite) killing time of $\xi$ by $\tau_\xi$ and recall from (\ref{e.kappa}) that $\tau_\xi$ is independent of the dynamics of the process $\xi$, up to its moment of killing, and exponentially distributed with parameter $\kappa$. Moreover, by means of Proposition~2 in Section 0.5 of  \cite{Ber96} we  have that $\tau(y_0):=\inf\{t\in \mathcal I_1:|\pi_1(t)|\in(0,y_0]\}$ is exponentially distributed with parameter $q$. 
It is  straightforward to check that every block which does not contain 1 and which is produced at some dislocation before the time $\tau(y_0)\wedge\tau_\xi$ of the block containing 1 will be no larger  than a proportion $e^{-a}$ of its parent.  This follows directly from the inequality that for all $t\in\mathcal{I}_1$ with $t<\tau(u_0)$ and all $j\in\mathbb{N}\backslash\{1\}$,
\[
|\pi_j(t)| \leq \sum_{n\in\mathbb{N}\backslash\{1\}} |\pi_n(t)|\leq 1- |\pi_1(t)|\leq 1-y_0\leq e^{-a}.
\]



The classical Thinning Theorem for Poisson point processes (e.g. Proposition~2 in Section 0.5 of \cite{Ber96}) allows us to conclude that $(X^x_1(u))_{u\in[0,\tau(y_0)\wedge\tau_\xi)}$   has the law of a L\'evy process, say $\widetilde X^x_1$, which is the difference of a linear drift with constant rate $c$ and a driftless subordinator with L\'evy measure $\mu(\pi\in\mathcal{P}: -\log |\pi_1|\in {\rm d}x)|_{(0, -\log y_0]}$, 
sampled up to  a time which is the minimum of two  independent and exponentially distributed random times, say $\mathbf{e}_q$ and $\mathbf{e}_\kappa$, with respective rates $q$ and $\kappa$.

Now define
\[
{R}^{(q+\kappa)}(a, x, {\rm d}y) = \int_0^\infty e^{-(q+\kappa)t} {\rm d}t\cdot \mathbb{P}\left(\widetilde{X}^x_1(t) \in{\rm d}y, \, \sup_{s\leq t}\widetilde{X}^x_1(s) \leq a,\, \inf_{s\leq t}\widetilde{X}^x_1(s) \geq 0\right),\qquad y\in(0,a).
\]
Theorem 8.7 in \cite{Kyp06} shows that ${R}^{(q+\kappa)}(a, x, {\rm d}y)$ is absolutely continuous with strictly positive Lebesgue density in the neighbourhood of the origin (this is at least immediately obvious for $y\in(0,x)$ by inspecting the expression for the resolvent in the aforementioned theorem). A little thought in light of the above remarks  reveals that,  on the event $\{\sup_{s< \tau(y_0)\wedge\tau_\xi}{X}^x_1(s) \leq a,\, \inf_{s< \tau(y_0)\wedge\tau_\xi}{X}^x_1(s) \ge 0\}$, the process $(X^x_1(u))_{u\in[0,\tau(y_0)\wedge\tau_\xi)}$ describes (on the negative-logarithmic scale and relative to the killing barrier) the {\bf only} surviving block in the process $\Pi^x$ over the time horizon $[0,\tau(y_0)\wedge\tau_\xi)$.

Using these facts, as well as the observation that $\tau(y_0)$ is almost surely not a jump time for $\widetilde{X}_1^x$, we now have the estimate
\begin{eqnarray*}
\mathbb{P}(\zeta^x<\infty) 
&\geq&  \mathbb P\left({X}^x_1(\tau(y_0)-)\in\left[0,-\log\left(y_0\right) \right),\,
\sup_{s< \tau(y_0)}{X}^x_1(s) \leq a,\, \inf_{s< \tau(y_0)}{X}^x_1(s) \geq 0,\, \tau(y_0)<\tau_\xi\right) \\
&\geq&  \mathbb P\left(\widetilde{X}^x_1(\mathbf{e}_q-)\in\left[0,-\log\left(y_0\right)\right),\,
\sup_{s< \mathbf{e}_q}\widetilde{X}^x_1(s) \leq a,\, \inf_{s< \mathbf{e}_q}\widetilde{X}^x_1(s) \geq 0,\, \mathbf{e}_q <\mathbf{e}_\kappa\right) \\
&\geq&  \mathbb E\left(e^{-\kappa\mathbf{e}_q} ; \widetilde{X}^x_1(\mathbf{e}_q-)\in\left[0,-\log\left(y_0\right)\right),\,
\sup_{s< \mathbf{e}_q}\widetilde{X}^x_1(s) \leq a,\, \inf_{s< \mathbf{e}_q}\widetilde{X}^x_1(s) \geq 0\right) \\
&=& q R^{(q+\kappa)}\left(a,x,\left[0,-\log\left(y_0\right)\right)\right)>0
\end{eqnarray*}
as required. 
\end{proof}

\section{Explosion of the number of blocks on survival}\label{proof of l.p.2.2}
In this section we provide the proof of Theorem~\ref{l.p.2.2}. To this end, we shall use the following auxiliary lemma which states that for any $n\in\N$ there exists a  time such that with positive probability the fragmentation process has at  least $n$ blocks. More precisely, we have the following result.

\begin{lemma}\label{l.l.p.2.2.1}
Let $c>c_{\bar p}$. Then for any $n\in\N$ there  exists a  $t>0$ such that
\begin{equation}\label{e.l.l.p.2.2.1.0}
\mathbb P\left(N^0_t\ge n\right)>0.
\end{equation}
\end{lemma}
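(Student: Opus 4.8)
The plan is to show that starting from a single block, with positive probability the killed fragmentation process accumulates at least $n$ surviving blocks by some deterministic time. Since $c > c_{\bar p}$, Lemma~\ref{positivesurviaval} tells us that $\mathbb P(\zeta^0 < \infty) < 1$, i.e. $\mathbb P(\zeta^0 = \infty) > 0$: the process survives forever with positive probability. On the survival event, the lone surviving block cannot stay lone indefinitely — eventually it must dislocate in such a way that at least two children land above (or on) the killing barrier, and by repeating this reasoning along the genealogical tree one produces arbitrarily many simultaneously surviving blocks.

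More concretely, first I would argue that on $\{\zeta^0 = \infty\}$ the process undergoes infinitely many dislocations in which two or more children survive. If not, then from some random finite time onward the survivor would be, and would remain, a single block evolving as $X_1^0$ (in the negative-logarithmic scale relative to the barrier), conditioned never to hit $0$; but then the whole killed process would be reduced to the survival problem for a single tagged block, and one would have $\mathbb P(\zeta^0=\infty) \le \mathbb P(\tau^-_{1,0}=\infty) = \psi'_p(0+)/c < 1$ — which does not by itself give a contradiction, so this needs more care. The cleaner route: use a recursive/branching argument. Define $p_k := \mathbb P(N^0_t \ge k \text{ for some } t > 0)$; we want $p_n > 0$ for all $n$. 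We know $p_2 > 0$ because survival with positive probability forces at least one ``binary'' split above the barrier to occur (otherwise the process is eventually a single killed Lévy-type path and its survival probability is strictly less than $1$, yet the event that it never splits AND survives would have to carry all the surviving mass — a contradiction once one notes that the single-block survival probability from a height strictly inside $(0,a)$ is itself $<1$ while dislocations keep occurring at rate bounded below). Then, by the branching/Markov structure at the first such split, conditionally on it each of the two surviving children independently starts its own killed fragmentation from a strictly positive height, and with positive probability at least one of them in turn produces $\ge n-1$ surviving descendants at a later deterministic time; iterating gives $p_n \ge \tfrac12 p_2 \cdot p_{n-1}$-type lower bounds, hence $p_n > 0$ by induction. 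Finally, $\{N^0_t \ge n \text{ for some } t\}$ is a countable union over rational times $t$, so $p_n > 0$ forces $\mathbb P(N^0_t \ge n) > 0$ for at least one deterministic $t$, which is the claim.

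For the base step, the concrete mechanism is the same construction used in Part~II of Lemma~\ref{positivesurviaval}: with positive probability the surviving block follows a path $\widetilde X_1^x$ that stays in a corridor $[0,a]$ up to an independent exponential time and at that time undergoes a dislocation of the special type (the parent block has mass $\ge y_0$ close to $1$) so that at least two pieces exceed $e^{-a}$ of the parent, hence survive the barrier; one can in fact force the parent's mass to split into two comparable large pieces by choosing the dislocation event $\{|\pi_1| \in (y_0,1),\ |\pi_2| \ge \delta\}$ of positive $\mu$-measure (this uses $\nu(s_2 = 0) = 0$ from (\ref{e.levymeasure.0}), which guarantees dislocations genuinely into $\ge 2$ blocks happen at positive rate). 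Combining with the resolvent positivity from Theorem~8.7 of \cite{Kyp06} exactly as in Part~II yields $\mathbb P(N^0_t \ge 2 \text{ for some } t) > 0$.

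The main obstacle I anticipate is making the recursive step rigorous: after the first good dislocation one has two (or more) surviving blocks whose asymptotic-frequency processes, on the negative-logarithmic scale and relative to the barrier, each start from strictly positive heights and evolve as \emph{independent} copies of a killed fragmentation process. Establishing this conditional independence and the correct initial conditions requires invoking the branching property of $\Pi$ at a jump time together with the Thinning Theorem for the governing Poisson point process, and being careful that the ``restart'' heights are strictly positive (so that the child processes are not instantly killed). Once that branching decomposition is in hand, the induction $p_n \ge c_0\, p_2\, p_{n-1}$ with a fixed constant $c_0 > 0$ is routine and the conclusion follows.
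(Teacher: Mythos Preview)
Your overall strategy --- establish the base case $\mathbb P(N^0_t\ge 2)>0$ via Poisson thinning plus resolvent positivity, then induct using the fragmentation property --- is exactly the paper's approach. The base case is handled essentially as you describe (the paper uses the half-line resolvent from Corollary~8.8 of \cite{Kyp06} rather than the two-sided corridor of Theorem~8.7, but the mechanism is the same: wait for the first dislocation with $|\pi|^\downarrow_1\le z_0$, which by $\nu(s_2=0)=0$ has positive probability of also satisfying $|\pi|^\downarrow_2\ge 1-z_0$, and use resolvent positivity to put the block high enough at that instant that both pieces survive).

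Where your sketch has a genuine gap is the inductive step. Your recursion $p_n\ge c_0\,p_2\,p_{n-1}$ with $p_k=\mathbb P(\exists t:N^0_t\ge k)$ does not close as written: after the first good split you need one child to produce $\ge n-1$ blocks at some later time \emph{and} the other child to still be alive at that same (random, child-dependent) time. The survival probability of the second child over that random horizon cannot be bounded below by a fixed $c_0$ without further argument, and this is precisely the difficulty you flag at the end. The paper's remedy is simple and worth noting: do not induct on $p_n$ at all. Instead fix once and for all a deterministic $u_0$ with $\mathbb P(N^0_{u_0}\ge 2)>0$ and prove $\mathbb P(N^0_{nu_0}\ge n+1)>0$ by induction on $n$. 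At time $nu_0$ with $\ge n+1$ blocks present, ask that $n$ of them merely survive a further $u_0$ units of time while one of them produces $\ge 2$ surviving offspring in that window; monotonicity in $x$ of both $\mathbb P(N^x_{u_0}\ge 2)$ and $\mathbb P(\zeta^x>u_0)$ then gives
\[
\mathbb P\bigl(N^0_{(n+1)u_0}\ge n+2\bigr)\ \ge\ \mathbb P\bigl(N^0_{nu_0}\ge n+1\bigr)\,\mathbb P\bigl(N^0_{u_0}\ge 2\bigr)\,\mathbb P\bigl(\zeta^0>u_0\bigr)^n\ >\ 0.
\]
Working on a fixed time grid makes all the factors deterministic-time quantities bounded uniformly via monotonicity, and the countable-union step at the end becomes unnecessary.
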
 

\begin{proof} 
In the first part of the proof we show that the probability of the event $\{N^0_t\ge2\}$ is positive for some $t\in\rpn$ and in the second part we use this in conjunction with an induction argument to prove the assertion.  

\underline{Part I}.
Let us first show that there exists some $z_0\in(\nicefrac{1}{2},1)$ such that 
\begin{equation}\label{e.z_0}
\mu(\pi\in\mathcal P:|\pi|^\downarrow_2 \ge 1-z_0, |\pi_1|>0)>0,
\end{equation}
where $\{|\pi|^\downarrow_i: i\geq 1\}$ represents the asymptotic frequencies of $\pi\in\mathcal{P}$ when ranked in descending order. 
To this end, assume $\mu(\pi\in\mathcal P:|\pi|^\downarrow_2 \ge a, |\pi_1|>0)=0$ for all $a\in(0,1)$. This assumption implies that $\mu(\pi\in\mathcal P:|\pi|^\downarrow_2  \neq 0, |\pi_1|>0)=0$, which in view of (\ref{e.levymeasure.0}) results in $\mu(\pi\in\mathcal P:|\pi_1|>0)=0$ and thus contradicts $\nu\ne0$. Consequently, there exists some $z_0\in(\nicefrac{1}{2},1)$ such that (\ref{e.z_0}) holds.
Next note that, on account of the inequality $|\pi|_1^\downarrow + |\pi|_2^\downarrow \leq 1$,
\[
p:=\mu(\pi\in\mathcal P:|\pi|^\downarrow_1 \leq z_0, |\pi_1|>0)\ge\mu(\pi\in\mathcal P:|\pi|^\downarrow_2 \ge 1-z_0, |\pi_1|>0)>0.
\]
Observe that 
$\nu({\bf s}\in\mathcal S:s_1\in(0,z_0])<\infty$,
as otherwise 
\[
\int_{\mathcal S}(1-s_1)\nu(\dd{\bf s})\ge\int_{\{{\bf s}\in\mathcal S:s_1\in(0,z_0]\}}(1-s_1)\nu(\dd{\bf s})\ge(1-z_0)\nu({\bf s}\in\mathcal S:s_1\in(0,z_0])=\infty,
\]  
which contradicts (\ref{e.levymeasure}). Therefore, we infer from formula (3) in \cite{HKK} that
\[
p\leq \mu(\pi\in\mathcal P:|\pi|^\downarrow_1 \leq z_0)\le 
\nu({\bf s}\in\mathcal{S}: s_1\in(0,z_0])<\infty.
\]

Now let $\eta(z_0) := \inf\{t\in\mathcal{I}_1: |\pi(t)|_1^\downarrow\leq z_0, |\pi_1(t)|>0\}$.
The classical Thinning Theorem for Poisson point processes 
shows that $|\pi(\eta(z_0))|^\downarrow$ and $\eta(z_0)$ are independent and that
\begin{equation}\label{e.l.l.p.2.2.1.0a}
\mathbb P\left(|\pi(\eta(z_0))|^\downarrow_2\ge1-z_0\right)=\frac{\mu(\pi\in\mathcal P:|\pi|^\downarrow_2\ge 1-z_0, |\pi_1|>0)}{\mu(\pi\in\mathcal P:|\pi|^\downarrow_1\le z_0, |\pi_1|>0)}>0.
\end{equation}
Moreover $|\pi(\eta(z_0))|^\downarrow$ and $(X^x_1(u))_{u\in[0,\eta(z_0))}$ are independent and $\eta(z_0)$ is 
   exponentially distributed with parameter $p$.
Now let $\hat X_1^x$ be a spectrally negative L\'evy process, shifted by $x\in\rpn$, which is written as the difference of 
a linear drift with rate $c$ and a driftless subordinator with L\'evy measure $\mu(\pi\in\mathcal{P}:  |\pi|_1^\downarrow > z_0;\,   -\log|\pi_1|\in{\rm d}y )$, $y>0$, which is independent of all other previously mentioned random objects. 

We want to work with its resolvent on the half line
\[
R^{(p+\kappa)}(x, {\rm d}y) :=\int_0^\infty e^{-(p+\kappa) t}{\rm d}t\cdot\mathbb{P}(
\hat X_1^x(t)\in{\rm d}y,\, \inf_{s\leq t}\hat X^x_1(s)\geq 0), \qquad y>0
\]
which is  known to have a strictly positive density for all $x\geq 0$, cf. Corollary 8.8 of \cite{Kyp06}. Note that in the case $x=0$ the process $\hat X_1^x$ will take a strictly positive amount of time to exit the domain $[0,\infty)$ on account of path irregularity, see the introduction of Chapter 8 of \cite{Kyp06}.
Let $\mathbf{e}_p$ and $\mathbf{e}_\kappa$ be two independent (of everything) exponentially distributed random variables with 
respective rates $p$ and $\kappa$.
Since almost surely neither $\mathbf{e}_p$ nor $\mathbf{e}_\kappa$ is a jump time for $\hat{X}^0_1$, it follows from (\ref{e.l.l.p.2.2.1.0a}) that
\begin{align*}
& \mathbb P\left(N^0_{\eta(z_0)}\ge2\right)\notag 
\\[0.5ex]
&\geq \mathbb P\left({X}^0_1(\eta(z_0)-)> -\log (1-z_0),\, |\pi(\eta(z_0))|^\downarrow_2\ge 1- z_0,\, \eta(z_0)<\tau_\xi\right)\notag
\\[0.5ex]
&= \mathbb E\left(e^{-\kappa \mathbf{e}_p}; \hat X^0_1(\mathbf{e}_p-)> -\log(1-z_0), \inf_{s<\mathbf{e}_p} \hat X_1^0(s)\geq 0\right)\mathbb P\left(|\pi(\eta(z_0))|^\downarrow_2\ge1-z_0\right)
\notag
\\[0.5ex]
%
&= pR^{(p+\kappa)}\left(0,(-\log (1-z_0), \infty)\right)\frac{\mu(\pi\in\mathcal P:|\pi|^\downarrow_2\ge 1-z_0, |\pi_1|>0)}{\mu(\pi\in\mathcal P:|\pi|^\downarrow_1\le z_0, |\pi_1|>0)}
>0.
\end{align*}
Given that $\eta(z_0)$ is exponentially distributed, it is now a standard argument to deduce that there must exist a $t>0$ such that
\begin{equation}\label{e.l.p.2.2.1a}
\mathbb P\left(N^0_t\ge2\right)>0.
\end{equation}

\underline{Part II}.
We prove (\ref{e.l.l.p.2.2.1.0}) by resorting to the principle of mathematical induction. 
To this end, let  $n\in\N$, fix some $u_0>0$ such that (\ref{e.l.p.2.2.1a})  holds and, as the induction hypothesis, assume that
\[
\mathbb P(N^0_{nu_0}\ge n+1)>0.
\]
To provide an estimate for $\mathbb P(N^0_{(n+1)u_0}\ge n+2)$ note that the event $\{N^0_{(n+1)u_0}\ge n+2\}$ contains the event that $N^0_{nu_0}\ge n+1$ and subsequently $n$ of the blocks alive at time $n u_0$ survive for a further $u_0$ units of time, whilst one of the blocks at time $nu_0$ succeeds in fragmenting further to produce at least two further particles $u_0$ units of time later. A lower bound on the probability of the latter event that makes use of the fragmentation property and the monotonicity  in $x$ of $\mathbb{P}(N^x_{nu_0}\ge n+1)$ and $P\left(\zeta^x>u_0\right)$, produces the estimate,
\begin{align*}
\mathbb P\left(N^0_{(n+1)u_0}\ge n+2\right) 
&\ge 
\mathbb P\left(N^0_{nu_0}\ge n+1\right)\mathbb P\left(N^0_{u_0}\ge2\right)\mathbb P\left(\zeta^0>u_0\right)^n
> 0.
\end{align*}
Coupled with (\ref{e.l.p.2.2.1a}), which closes the argument by induction, the proof of the lemma is complete. 
\end{proof}

Having established the previous lemma we are now in a position to tackle the proof of Theorem~\ref{l.p.2.2}.

{\it Proof of Theorem~\ref{l.p.2.2}.} By Lemma \ref{l.l.p.2.2.1},
fix some $k\in\N$ as well as $t_0>0$ such that $\mathbb P\left(N^0_{t_0}\ge k\right)>0$ and for every $n\in\N$ and $x\in\rpn$ define 
\[
E^x_n:=\left\{\omega\in\Omega:N^x_{nt_0}(\omega)\ge k\right\}.
\] 
By means of the fragmentation property and  
the monotonicity  in $x$ of $\mathbb P\left(N^x_{t_0}\ge k\right)$
\begin{equation}\label{e.l.p.2.2.4}
\mathbb P\left(E^x_n\left|\mathcal F_{(n-1)t_0}\right.\right)
\ge \mathbb P(N^0_{t_0}\ge k)>0
\end{equation}
on $\{\zeta^x=\infty\}$.
As a consequence of (\ref{e.l.p.2.2.4}) we obtain that
\begin{equation}\label{e.BC}
\sum_{n\in\N}\mathbb P\left(\left.E^x_n\right|\mathcal F_{(n-1)t_0}\right)=\infty
\end{equation}
$\mathbb P$--a.s. on $\{\zeta^x=\infty\}$ for any $x\in\rpn$. 

Since $E^x_n$ is $\mathcal F_{nt_0}$--measurable, we can apply the extended Borel--Cantelli lemma (see e.g.  Corollary (3.2) in Chapter~4 of \cite{Dur91} or Corollary~5.29 in \cite{Bre92}) to deduce that 
\[
\{E^x_n\text{ happens infinitely often}\}=\left\{\sum_{n\in\N}\mathbb P\left(\left.E^x_n\right|\mathcal F_{(n-1)t_0}\right)=\infty\right\},
\]
and thus (\ref{e.BC}) shows that on the event $\{\zeta^x=\infty\}$, $x\in\rpn$, the event $E^x_n$ happens for infinitely many $n\in\N$. 
Consequently, we infer by monotonicity in $x$ of $N^x_t$ that
\[
\mathbb P\left(\left.\limsup_{t\to\infty}N^x_t\ge k\right|\zeta^x=\infty\right)=1,
\]
which proves the assertion on account of the fact that $k$ may be taken arbitrarily large.
\hfill$\square$

\section{Multiplicative martingales}\label{s.mm}

Like many different types of spatial branching processes, the probability of extinction of our killed fragmentation process turns out to be  intimately related to certain product martingales which we now introduce. 

More specifically, the object under consideration in the present section is the stochastic process defined as follows. For any function $f:\R\to[0,1]$ and $x\in\rpn$ let $Z^{x,f}:=\{Z^{x,f}_t:t\geq 0\}$\label{p.Z^x^f} be given by
\[
Z^{x,f}_t=\prod_{n\in\mathcal N^x_t}f\left(x + ct + \log |\Pi^x_n(t)|\right),\, t\geq 0.
\] 
We are interested in understanding which functions $f$ make the above process a martingale. In that  case we refer to $Z^{x,f}$ as a {\it multiplicative martingale}. The following theorem shows that within the class of nonincreasing functions which are valued zero at $\infty$, there is a unique choice of $f$. 

\begin{theorem}\label{t.p.t.1.2.1.T}
Let $c>c_{\bar p}$ and let $f:\R\to[0,1]$ be a monotone function. Then the following two statements are equivalent.
\begin{itemize}
\item[(i)] For any $x\in\rpn$ the process $Z^{x,f}$ is a martingale with respect to the filtration $\mathcal{F}$ and 
\[
\lim_{x\to\infty}f(x)=0.
\]
\item[(ii)] For all $x\in\rpn:$
\[
f(x)=\mathbb P\left(\zeta^x<\infty\right).
\]
\end{itemize}
\end{theorem}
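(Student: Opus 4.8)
The plan is to prove the two implications separately, with the bulk of the work lying in $(ii)\Rightarrow(i)$. For $(i)\Rightarrow(ii)$, I would first record that since $Z^{x,f}$ is a bounded $[0,1]$-valued martingale, it converges $\mathbb{P}$-a.s.\ and in $L^1$ to a limit $Z^{x,f}_\infty$, with $f(x)=\mathbb{E}(Z^{x,f}_\infty)$. The key is to identify $Z^{x,f}_\infty$ with $\mathds{1}_{\{\zeta^x<\infty\}}$. On the event $\{\zeta^x<\infty\}$ the product is eventually empty, hence equals $1$ (an empty product), so $Z^{x,f}_\infty=1$ there. On $\{\zeta^x=\infty\}$ I would argue that $Z^{x,f}_\infty=0$: by Theorem~\ref{l.p.2.2} we have $\limsup_{t\to\infty}N^x_t=\infty$ $\mathbb{P}(\cdot\mid\zeta^x=\infty)$-a.s., so along a subsequence the product has more and more factors; combined with the fact that each surviving block eventually has $x+ct+\log|\Pi^x_n(t)|$ large (the tagged-fragment/Lévy-process picture shows individual blocks drift up to $+\infty$ relative to the barrier on survival, so the relevant arguments of $f$ are large and $f$ of them is close to $\lim_{x\to\infty}f(x)=0$), one gets the product tending to $0$. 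Care is needed to make "many factors each close to something $<1$" rigorous — I would extract finitely many blocks whose barrier-distances exceed a level $K$ with $f(K)<1-\delta$ and let $K\to\infty$. This yields $f(x)=\mathbb{P}(\zeta^x<\infty)$.

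For $(ii)\Rightarrow(i)$, set $f(x)=\mathbb{P}(\zeta^x<\infty)$; by Theorem~\ref{p.ext-probab} this is nonincreasing and $(0,1)$-valued, so it is a legitimate monotone $f:\R\to[0,1]$ once extended by $1$ on $(-\infty,0)$, and I must separately check $\lim_{x\to\infty}f(x)=0$. The martingale property is the heart of the matter and should follow from the branching (fragmentation) property of $\Pi^x$ together with the strong Markov property. The idea: conditionally on $\mathcal{F}_t$, the killed process $\Pi^x$ restarted from time $t$ decomposes as independent killed fragmentation processes, one emanating from each surviving block $\Pi^x_n(t)$, $n\in\mathcal{N}^x_t$, where the block with current barrier-distance $x+ct+\log|\Pi^x_n(t)|$ behaves like an independent copy of $\Pi^{x+ct+\log|\Pi^x_n(t)|}$ (after the scaling/shifting that sends that block's size to $1$ and resets the barrier). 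The total process becomes extinct iff every one of these independent sub-processes becomes extinct, so
\[
\mathbb{P}\bigl(\zeta^x<\infty\mid\mathcal{F}_t\bigr)=\prod_{n\in\mathcal{N}^x_t}\mathbb{P}\bigl(\zeta^{\,x+ct+\log|\Pi^x_n(t)|}<\infty\bigr)=\prod_{n\in\mathcal{N}^x_t}f\bigl(x+ct+\log|\Pi^x_n(t)|\bigr)=Z^{x,f}_t .
\]
Since the left-hand side is automatically an $\mathcal{F}$-martingale (it is $\mathbb{E}(\mathds{1}_{\{\zeta^x<\infty\}}\mid\mathcal{F}_t)$), so is $Z^{x,f}_t$. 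For $\lim_{x\to\infty}f(x)=0$: $f$ is nonincreasing so the limit $\ell$ exists; using the above identity with the single tagged block and letting $x\to\infty$, or directly comparing $\{\zeta^x<\infty\}$ with $\{\tau^-_{1,x}<\infty\}$-type events via the change of measure $\mathbb{P}^{(p)}$ and the scale-function identity \eqref{8.15}, one shows $f(x)\le\mathbb{P}(\tau^-_{1,x}<\infty)\to 0$ as $x\to\infty$ because, for suitable $p\in(\underline p,\bar p)$ with $\psi'_p(0+)>0$, $\mathbb{P}^{(p)}(\tau^-_{1,x}<\infty)=1-\psi'_p(0+)W_p(x)\to 0$ (as $W_p(x)\to 1/\psi'_p(0+)$), together with the equivalence of $\mathbb{P}$ and $\mathbb{P}^{(p)}$ on $\mathcal{G}_\infty$ from Remark~\ref{r.equivalentmeasures}.

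The main obstacle I anticipate is making the branching decomposition fully rigorous at the level of the \emph{killed} process: one must verify that a block which at time $t$ has barrier-distance $y:=x+ct+\log|\Pi^x_n(t)|$ indeed seeds an independent copy of the killed fragmentation process with parameter $y$ — this requires the self-similarity/homogeneity of $\Pi$ under the logarithmic change of variables and a check that the killing barrier $e^{-(x+cs)}$ seen from the sub-block, after rescaling, is exactly the barrier $e^{-(y+c(s-t))}$. A secondary subtlety, on the $(i)\Rightarrow(ii)$ side, is the convergence $Z^{x,f}_\infty=0$ on survival: "infinitely often many blocks" from Theorem~\ref{l.p.2.2} is only a $\limsup$ statement, so I would need to combine it with control on the barrier-distances of those blocks (they cannot all stay bounded, else one could build a finite killing time), perhaps invoking that each fixed block's distance process is a spectrally negative Lévy process drifting to $+\infty$ under the survival conditioning. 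I would phrase this as: for any $\delta>0$ there is $K$ with $f(K)\le\delta$, and on $\{\zeta^x=\infty\}$ there are, along a subsequence $t_j\to\infty$, at least $m$ surviving blocks with barrier-distance $\ge K$ for arbitrarily large $m$, whence $Z^{x,f}_{t_j}\le \delta^m+o(1)\to 0$.
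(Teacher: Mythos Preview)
Your $(ii)\Rightarrow(i)$ direction is essentially the paper's: the fragmentation decomposition gives $\mathbb{P}(\zeta^x<\infty\mid\mathcal{F}_t)=Z^{x,g}_t$ with $g(x)=\mathbb{P}(\zeta^x<\infty)$, and hence the martingale property. For $g(+\infty)=0$ the paper uses a short $0$--$1$ argument directly from that identity (letting $x\to\infty$ yields $g(+\infty)=\mathbb{E}[g(+\infty)^{N}]$ with $N=\lim_{y\to\infty}N^y_t$, forcing $g(+\infty)\in\{0,1\}$, and $g<1$ then gives $0$). Your alternative via $f(x)\le\mathbb{P}(\tau^-_{1,x}<\infty)$ and Remark~\ref{r.equivalentmeasures} does not work as stated: $\{\tau^-_{1,x}<\infty\}$ is $\mathcal{F}^1_\infty$-measurable, not $\mathcal{G}_\infty$-measurable, so the equivalence of $\mathbb{P}$ and $\mathbb{P}^{(p)}$ on $\mathcal{G}_\infty$ does not transfer that probability; and under $\mathbb{P}$ itself you do not know that $\psi'(0+)=c-\Phi'(0+)>0$.

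The genuine gap is in $(i)\Rightarrow(ii)$, at the step $Z^{x,f}_\infty=0$ on $\{\zeta^x=\infty\}$. You invoke Theorem~\ref{l.p.2.2} (unbounded \emph{number} of blocks) and then try to argue that many of those blocks simultaneously have barrier-distance $\ge K$; Theorem~\ref{l.p.2.2} does not give that, and your heuristic that ``individual blocks drift to $+\infty$ relative to the barrier on survival'' is false as stated---individual blocks get killed. The paper bypasses this entirely via Lemma~\ref{p.2}: on $\{\zeta^x=\infty\}$ one has $\limsup_{t\to\infty}R^x_1(t)=\infty$, where $R^x_1(t)=x+ct+\log\lambda^x_1(t)$ is the barrier-distance of the \emph{largest} surviving fragment. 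Since every factor of $Z^{x,f}_t$ lies in $[0,1]$, one has simply $Z^{x,f}_t\le f(R^x_1(t))$, and along a subsequence $R^x_1(t_j)\to\infty$ this single bound forces $Z^{x,f}_\infty=0$. No block-counting is needed in this step. The proof of Lemma~\ref{p.2} does use Theorem~\ref{l.p.2.2}, but combines it with the two-sided exit estimate for $X_1$ under $\mathbb{P}^{(p)}$ (each of the $\ge n$ blocks at time $\sigma_n$ has probability $\ge 1/(cW_p(z))$ of exiting $[0,z)$ upwards) to push the \emph{maximum} barrier-distance past any level $z$.
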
 

For any $c>c_{\bar p}$ and $t,x\in\rpn$ define 
\[
R^x_1(t) = x + ct +\log \lambda_1(t).
\] 
In order to prove Theorem~\ref{t.p.t.1.2.1.T} we shall use the following lemma which states that on survival of the killed fragmentation process the process $(R^x_1(t))_{t\in\rpn}$ is unbounded.

\begin{lemma}\label{p.2}
Let $c>c_{\bar p}$ and $x\in\rpn$. 
Then we have
\[
\limsup_{t\to\infty}R^x_1(t)=\infty
\]
$\mathbb P(\cdot|\zeta^x=\infty)$--almost surely.
\end{lemma}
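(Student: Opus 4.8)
The plan is to deduce the statement essentially for free from Proposition~\ref{largestfragmentnotkilled}: the process $R^x_1$ is built from $\lambda_1$, the largest fragment of the \emph{unkilled} process $\Pi$, and since $c>c_{\bar p}$ the killing barrier $x+ct$ recedes from $-\log\lambda_1(t)$ at the strictly positive linear rate $c-c_{\bar p}$, so that $R^x_1(t)$ in fact diverges to $+\infty$, which is far stronger than the claimed unboundedness of the $\limsup$ on survival.

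Concretely, I would first invoke Proposition~\ref{largestfragmentnotkilled}, which gives $-t^{-1}\log\lambda_1(t)\to c_{\bar p}$ $\mathbb P$--almost surely; in particular $\lambda_1(t)>0$ for all sufficiently large $t$ on a set of full $\mathbb P$--measure, so that $\log\lambda_1(t)$, and hence $R^x_1(t)$, is eventually finite and the assertion is meaningful. Dividing the defining relation $R^x_1(t)=x+ct+\log\lambda_1(t)$ by $t$ then yields
\[
\frac{R^x_1(t)}{t}=\frac{x}{t}+c+\frac{\log\lambda_1(t)}{t}\longrightarrow c-c_{\bar p}
\qquad\text{as }t\to\infty,
\]
$\mathbb P$--almost surely, and the limit is strictly positive by the hypothesis $c>c_{\bar p}$. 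Hence $R^x_1(t)\to+\infty$ $\mathbb P$--a.s., and a fortiori $\limsup_{t\to\infty}R^x_1(t)=\infty$ $\mathbb P$--a.s.

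Finally I would pass from $\mathbb P$ to the conditional law $\mathbb P(\cdot|\zeta^x=\infty)$. By Lemma~\ref{positivesurviaval} (equivalently, the second part of Theorem~\ref{p.ext-probab}) we have $\mathbb P(\zeta^x=\infty)=1-\mathbb P(\zeta^x<\infty)>0$ for every $x\in\rpn$ whenever $c>c_{\bar p}$, so $\{\zeta^x=\infty\}$ has positive probability and $\mathbb P(\cdot|\zeta^x=\infty)$ is absolutely continuous with respect to $\mathbb P$; therefore the $\mathbb P$--almost sure identity $\limsup_{t\to\infty}R^x_1(t)=\infty$ persists under $\mathbb P(\cdot|\zeta^x=\infty)$. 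There is no genuine obstacle in this argument: the entire content of the lemma is contained in Proposition~\ref{largestfragmentnotkilled}, the only auxiliary remarks being the eventual finiteness of $\log\lambda_1(t)$ and the positivity of $\mathbb P(\zeta^x=\infty)$, both of which have already been established.
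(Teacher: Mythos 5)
You took the paper's displayed formula $R^x_1(t)=x+ct+\log\lambda_1(t)$ at face value, with $\lambda_1$ the largest block of the \emph{unkilled} process $\Pi$, and under that reading the lemma is indeed an immediate corollary of Proposition~\ref{largestfragmentnotkilled}, with $R^x_1(t)/t\to c-c_{\bar p}>0$. However, that reading is almost certainly a misprint in the paper, and your proof rests entirely on it. Two internal checks show that the intended quantity is built from the \emph{killed} process, namely $R^x_1(t)=x+ct+\log\lambda^x_1(t)=\sup_{n\in\mathcal N^x_t}X^x_n(t)$. First, in Part~I of the proof of Theorem~\ref{t.p.t.1.2.1.T}, the lemma is invoked through the bound $Z^{x,f}_\infty\le\liminf_{t\to\infty}f(R^x_1(t))$; this relies on $Z^{x,f}_t\le f(R^x_1(t))$, which holds only when $R^x_1(t)$ equals the position of a \emph{surviving} block that contributes a factor to the product $Z^{x,f}_t$. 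For the unkilled $\lambda_1$ (which dominates $\lambda^x_1$, and may correspond to a killed lineage) this inequality fails. Second, the paper's own proof of the lemma hinges on the set identity $\{\sup_{s\ge 0}R^x_1(s)<z\}=\Gamma^x_z$, where $\Gamma^x_z$ requires every killed process $X^x_n$ to remain in $[0,z)$; this is an equality only when $R^x_1$ refers to surviving blocks, since a killed lineage can later exceed $z-x$ without affecting $\Gamma^x_z$.

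With the correct definition your argument breaks down and, worse, becomes circular. One only has $\lambda^x_1(t)\le\lambda_1(t)$, so Proposition~\ref{largestfragmentnotkilled} gives no lower bound on $\lambda^x_1$; the assertion that $-\log\lambda^x_1(t)/t\to c_{\bar p}$ on survival is exactly Theorem~\ref{l.fp.1}, which is proved at the very end of the paper and depends, via the additive martingale of Theorem~\ref{t.mainresult.2} and the multiplicative martingale of Theorem~\ref{t.p.t.1.2.1.T}, on the present lemma. The paper's proof is therefore genuinely nontrivial: it uses Theorem~\ref{l.p.2.2} (the number of alive blocks explodes on survival), the strong fragmentation property, and a scale-function estimate under the tilted law $\mathbb P^{(p)}$ to show $\mathbb P^{(p)}(\Gamma^x_z\,|\,\mathcal F_{\sigma_n})\le\bigl(1-\tfrac{1}{cW_p(z)}\bigr)^n$ on $\{\zeta^x=\infty\}$, forcing $\mathbb P^{(p)}(\Gamma^x_z\cap\{\zeta^x=\infty\})=0$ and hence the claim after transferring to $\mathbb P$ via Remark~\ref{r.equivalentmeasures}. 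In short, the statement is not ``essentially for free'' from the unkilled rate of decay; it requires an argument that works directly with the killed fragments.
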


\begin{proof}
Let $z>x$ and set 
\[
\Gamma^x_z:=\text{\Large\{}\omega\in\Omega:\inf\{t\in\rpn:X^x_n(t)(\omega)\not\in [0,z)\}=\infty\,\forall\,n\in\N\text{\Large\}}.
\] 
Theorem~12 in Section~VI.3 of \cite{Ber96} shows that the probability that a spectrally negative L\'evy process never leaves the  interval $(0,z)$  when started in its interior is zero. Consequently, we have that 
\[
\tau^-_{n,x}<\tau^+_{n,z-x}=\infty\quad\text{on}\quad \Gamma^x_z.
\] 
For each $n\in\N$ set  
\[
\sigma_n:=\inf\{t\in\rpn:N^x_t\ge n\}
\]
and note that Theorem~\ref{l.p.2.2} implies that $\sigma_n$ is a $\mathbb P$--a.s. finite stopping time on $\{\zeta^x=\infty\}$. Let $\widetilde{\mathcal{N}}_t^x = \{n\in\mathbb{N}: X_n^x(t) \geq 0\}$ and introduce the equivalence relation $\sim$ on $\widetilde{\mathcal{N}}_t^x$  such that $n\sim m$ when $n\in B_m(t)$.
The cardinality of $\widetilde{\mathcal{N}}^x_t/\sim$ is equal to $N^x_t$. Further, let $p\in(\underline p,\bar p)$. 
By means of Lemma~8.6 of \cite{Kyp06}, we then infer from the strong fragmentation property and equation (8.8) of Theorem~8.1 of \cite{Kyp06} that
\begin{align*}
\mathbb P^{(p)}(\Gamma^x_z|\mathcal F_{\sigma_n})\notag 
&\le \prod_{k\in\widetilde{\mathcal N}^x_{\sigma_n}/\sim}\left.\mathbb P^{(p)}(\Gamma^y_z)\right|_{y=X^x_k({\sigma_n})} \notag
\\[0.5ex]
&= \prod_{k\in\widetilde{\mathcal N}^x_{\sigma_n}/\sim}\left.\mathbb P^{(p)}(\tau^-_{k,y}<\tau^+_{k,z-y})\right|_{y=X^x_k({\sigma_n})}
\\[0.5ex]
&= \prod_{k\in\widetilde{\mathcal N}^x_{\sigma_n}/\sim}\left(1-\frac{W_p(X^x_k({\sigma_k}))}{W_p(z)}\right)\notag
\\[0.5ex]
&\le \left(1-\frac{1}{cW_p(z)}\right)^{N^x_{\sigma_n}}\notag
\\[0.5ex]
&\le \left(1-\frac{1}{cW_p(z)}\right)^n\notag
\end{align*}
$\mathbb P^{(p)}$--a.s. on $\{\zeta^x=\infty\}$ for any $n\in\N$. Therefore, since $\{R^x_1(s)<z\,\forall\,s\in\rpn\}=\Gamma^x_z$ ,we have
\begin{align*}
\mathbb P^{(p)}\left(\left\{\sup_{s\in\rpn}R^x_1(s)<z\right\}\cap\left\{\zeta^x=\infty\right\}\right) &= \mathbb P^{(p)}\left(\Gamma^x_z\cap\left\{\zeta^x=\infty\right\}\right)
\\[0.5ex]
&= \lim_{n\to\infty}\mathbb E^{(p)}\left(\left.\mathbb P^{(p)}\left(\Gamma^x_z\cap\left\{\zeta^x=\infty\right\}\right|\mathcal F_{\sigma_n}\right)\right)
\\[0.5ex]
&= \mathbb E^{(p)}\left(\lim_{n\to\infty}\left.\mathbb P^{(p)}\left(\Gamma^x_z\cap\left\{\zeta^x=\infty\right\}\right|\mathcal F_{\sigma_n}\right)\right)
\\[0.5ex]
&= 0.
\end{align*}
From this last equality and the fact that $z>x$ is arbitrary, one readily deduces that 
\[
\mathbb P^{(p)}\left(\left\{\limsup_{s\to\infty}R^x_1(s)<\infty\right\}\cap\left\{\zeta^x=\infty\right\}\right) 
= 0.
\]
Since both events $\{\limsup_{s\to\infty}R^x_1(s)<\infty\}$ and $\{\zeta^x=\infty\}$ are $\mathcal G_\infty$-measurable, we therefore infer from Remark~\ref{r.equivalentmeasures} that
\[
\mathbb P\left(\left\{\limsup_{s\to\infty}R^x_1(s)<\infty\right\}\cap\left\{\zeta^x=\infty\right\}\right)=0,
\]
which proves the assertion.
\end{proof}

Let us now tackle the proof of Theorem~\ref{t.p.t.1.2.1.T}.

{\it Proof of Theorem~\ref{t.p.t.1.2.1.T}.}
The proof is guided by a similar result in Harris et al. \cite{harrisetal} for branching Brownian motion. We divide the proof into two parts.  The first part proves the uniqueness of  monotone functions $f$  satisfying $\lim_{y\to\infty}f(y) = 0$  for which $Z^{x,f}$ is a martingale. Part II of the proof shows that the probability of extinction constitutes a function that makes $Z^{x,f}$ a martingale.

\underline{Part I}.
By the martingale convergence theorem we have that $Z^{x,f}$ being a 
nonnegative martingale implies that $Z^{x,f}_\infty:=\lim_{t\to\infty}Z^{x,f}_t$\label{p.Z^x^f_inf} exists $\mathbb P$--almost surely. Since the empty product equals 1 it is immediately clear that 
\begin{equation}\label{e.l.p.t.1.2.1.1}
Z^{x,f}_\infty=1
\end{equation} 
holds $\mathbb P$--a.s. on $\{\zeta^x<\infty\}$. Moreover, according to Lemma~\ref{p.2} we have that $\limsup_{t\to\infty}R^x_1(t)=\infty$ $\mathbb P$--a.s. on $\{\zeta^x=\infty\}$. Since $\lim_{y\to\infty}f(y)=0$, we thus deduce that
\begin{equation}\label{e.l.p.t.1.2.1.2}
0\le Z^{x,f}_\infty\le\liminf_{t\to\infty}f(R^x_1(t))
=0
\end{equation} 
$\mathbb P$--a.s. on $\{\zeta^x=\infty\}$. Hence, in view of (\ref{e.l.p.t.1.2.1.1}) and (\ref{e.l.p.t.1.2.1.2}) we infer that
\begin{equation}\label{e.l.p.t.1.2.1.3}
Z^{x,f}_\infty=\mathds 1_{\{\zeta^x<\infty\}}
\end{equation}
holds true $\mathbb P$--almost surely. As a consequence of $Z^{x,f}$ being a bounded, and hence uniformly integrable, martingale we conclude from (\ref{e.l.p.t.1.2.1.3}) that
\[
f(x)=\mathbb E(Z^{x,f}_0)=\mathbb E(Z^{x,f}_\infty)=\mathbb P(\zeta^x<\infty).
\]

\underline{Part II}.
Recalling Lemma \ref{positivesurviaval}, let $g:\R\to(0,1)$ be given by $g(x)=\mathbb P(\zeta^x<\infty)$. Since $g$ is monotone and bounded, the limit $g(+\infty): =\lim_{x\to\infty}g(x)$ exists in $[0,1)$. Furthermore, for any $t\in\rpn$ we have 
$\lim_{x\to\infty}\mathds1_{\mathcal N^x_t}(n)=1$ $\mathbb P$--a.s. for every $n\in\N$. In addition, we have that $X^x_n(t)\to\infty$ $\mathbb P$--a.s. for any $n\in\N$  and $t\in\rpn$ as $x\to\infty$. Resorting to the fragmentation property of $\Pi$ we deduce that  
\begin{equation}\label{e.l.e.FKPP.2.1.1.0a.b}
g(x)=\mathbb E\left(\mathbb P\left(\left.\zeta^x<\infty\right|\mathcal F_t\right)\right)=\mathbb E\left(\prod_{n\in\mathcal N^x_t}g\left(x + ct +\log |\Pi^x_n(t)|\right)\right)=\mathbb E\left(Z^{x,g}_t\right)
\end{equation}  
holds for all $t\in\rpn$. 
By means of the fragmentation property we thus have that
\[
\mathbb E\left(\left.Z^{x,g}_{t+s}\right|\mathcal F_t\right)
=
\prod_{n\in\mathcal N^x_t}g(x + ct +\log |\Pi^x_n(t)|)=Z^{x,g}_t
\]
$\mathbb P$--almost surely. 
Hence, $Z^{x,g}$ is a $\mathbb P$--martingale. Moreover, by the  Dominated Converge Theorem,  we deduce from (\ref{e.l.e.FKPP.2.1.1.0a.b}) that
\begin{align*}
g(+\infty) &= \lim_{x\to\infty}\mathbb E\left(\prod_{n\in\mathcal N^x_t}g(x + ct +\log |\Pi^x_n(t)|)\right)
\\[0.5ex]
&= \mathbb E\left(\lim_{y\to\infty}\prod_{n\in\mathcal N^y_t}\lim_{x\to\infty}g(x)\right)
\\[0.5ex]
&= \mathbb E\left(\lim_{y\to\infty}\lim_{x\to\infty}g(x)^{N^y_t}\right).
\end{align*}
Consequently,  
$
g(+\infty)\in\{0,1\}.
$
Since $g$ is decreasing and $g(x)\in(0,1)$ for all $x\in\rpn$, this forces us to choose  $g(+\infty)=0$.
\hfill$\square$

\section{Additive martingales}\label{s.am}
In this section we deal with an additive stochastic process $M^x(p):=(M^x_t(p))_{t\in\rpn}$, $p\in(\underline p,\infty)$, that for $c>c_{\bar p}$ and $x\in\rpn$, is given by
\begin{align*}
M^x_t(p) &= \sum_{n\in\mathcal N^x_t}W_p\left(x + ct +\log |\Pi^x_n(t)|\right)e^{\Phi(p)t}|\Pi^x_n(t)|^{1+p}.
\end{align*}

The main result of this section is the following theorem. 

\begin{theorem}\label{t.mainresult.2}
Let $c>c_{\bar p}$ and let $p\in(\underline p,\bar p)$ be such that $c>\Phi'(p)$. Then the process $M^x(p)$ is a nonnegative $\mathcal F$--martingale with $\mathbb P$--a.s. limit $M^x_\infty(p)$. Moreover, this martingale limit satisfies
\[
\mathbb P\left(\{M^x_\infty(p)=0\}\triangle\{\zeta^x<\infty\}\right)=0,
\]
where $\triangle$\label{p.sd} denotes the symmetric difference.
\end{theorem}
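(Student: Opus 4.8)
The plan is to establish, in turn, that $M^x(p)$ is a nonnegative $\mathcal F$--martingale, that it is uniformly integrable with $\mathbb E(M^x_\infty(p))=W_p(x)>0$, and finally that $q(x):=\mathbb P(M^x_\infty(p)=0)$ coincides with $\mathbb P(\zeta^x<\infty)$; the last step will be obtained by verifying that $q$ satisfies the hypotheses of Theorem~\ref{t.p.t.1.2.1.T}.

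\textbf{Martingale property.} First I would prove the one--block identity $\mathbb E(M^y_t(p))=W_p(y)$ for all $y,t\in\rpn$. Since $x+ct+\log|\Pi^x_n(t)|=X^x_n(t)$ on $\{n\in\mathcal N^x_t\}$, writing $M^y_t(p)$ as a sum over the surviving blocks of $\Pi(t)$ and applying the many--to--one formula for the tagged fragment (equivalently, the $\mathcal G_t$--density in (\ref{1DCOM})) gives
\[
\mathbb E\big(M^y_t(p)\big)=\mathbb E\big(e^{\Phi(p)t}|B_1(t)|^{p}W_p(X_1(t)+y)\mathds1_{\{\tau^-_{1,y}>t\}}\big)=\mathbb E^{(p)}\big(W_p(X_1(t)+y)\mathds1_{\{\tau^-_{1,y}>t\}}\big),
\]
where the last equality uses that $e^{\Phi(p)t}|B_1(t)|^{p}=e^{\Phi(p)t-p\xi(t)}$ is exactly the density in (\ref{1DCOM}). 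Under $\mathbb P^{(p)}$ the process $X_1$ is a spectrally negative L\'evy process with $\psi_p'(0+)=c-\Phi'(p)>0$ by hypothesis, so by (\ref{8.15}) the function $z\mapsto\psi_p'(0+)W_p(z)$, extended by $0$ on $(-\infty,0)$, is the probability that such a process started at $z$ never enters $(-\infty,0)$ and is hence harmonic for it killed on first entering $(-\infty,0)$; by the Markov property this yields $\mathbb E^{(p)}(W_p(X_1(t)+y)\mathds1_{\{\tau^-_{1,y}>t\}})=W_p(y)$. I would then feed this into the fragmentation property at a fixed time $s$: conditionally on $\mathcal F_s$ the descendants of a surviving block $n$ form an independent killed fragmentation, relative to the barrier shifted to parameter $y=X^x_n(s)$, whose contribution to $M^x_{s+t}(p)$ is $e^{\Phi(p)s}|\Pi^x_n(s)|^{1+p}M^{(n),y}_t(p)$, so that
\[
\mathbb E\big(M^x_{s+t}(p)\mid\mathcal F_s\big)=\sum_{n\in\mathcal N^x_s}e^{\Phi(p)s}|\Pi^x_n(s)|^{1+p}W_p(X^x_n(s))=M^x_s(p);
\]
being a nonnegative martingale, $M^x(p)$ converges $\mathbb P$--a.s.\ to a limit $M^x_\infty(p)$.

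\textbf{Uniform integrability and positive mean.} Since $p\in(\underline p,\bar p)$ forces $\psi_p'(0+)>0$, the scale function $W_p$ is bounded (cf.\ \cite{Kyp06}); and because the blocks $\{\Pi^x_n(t):n\in\mathcal N^x_t\}$ are a subfamily of the blocks of $\Pi(t)$ and $1+p>0$, we get $0\le M^x_t(p)\le\|W_p\|_\infty M_t(p)$ for all $t$. As $M(p)$ is a uniformly integrable martingale for $p\in(\underline p,\bar p)$ (Remark~\ref{r.equivalentmeasures}), $M^x(p)$ is dominated by a uniformly integrable family and is therefore itself uniformly integrable, so $\mathbb E(M^x_\infty(p))=\mathbb E(M^x_0(p))=W_p(x)>0$ (recall $W_p\ge W_p(0+)=1/c$); in particular $q(x)<1$ for every $x\in\rpn$. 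Moreover, by the empty--product convention, $M^x_t(p)=0$ for all large $t$ on $\{\zeta^x<\infty\}$, whence $\{\zeta^x<\infty\}\subseteq\{M^x_\infty(p)=0\}$ $\mathbb P$--a.s.; it remains to prove the reverse inclusion up to a $\mathbb P$--null set.

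\textbf{Identification of $q$.} Because $\mathcal N^x_t$ is finite, passing $u\to\infty$ through the finite sum in the fragmentation decomposition of $M^x_{t+u}(p)$ gives $M^x_\infty(p)=\sum_{n\in\mathcal N^x_t}e^{\Phi(p)t}|\Pi^x_n(t)|^{1+p}M^{(n)}_\infty(p)$, where conditionally on $\mathcal F_t$ the $M^{(n)}_\infty(p)$ are independent with $M^{(n)}_\infty(p)\overset{d}{=}M^{X^x_n(t)}_\infty(p)$. As the coefficients are strictly positive, $\{M^x_\infty(p)=0\}=\bigcap_{n\in\mathcal N^x_t}\{M^{(n)}_\infty(p)=0\}$, so $q(x)=\mathbb E\big(\prod_{n\in\mathcal N^x_t}q(X^x_n(t))\big)=\mathbb E(Z^{x,q}_t)$ for every $t$, and the same computation as in Part~II of the proof of Theorem~\ref{t.p.t.1.2.1.T} upgrades this to: $Z^{x,q}$ is an $\mathcal F$--martingale. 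To apply Theorem~\ref{t.p.t.1.2.1.T} I still need $q$ monotone with $q(+\infty)=0$. For monotonicity I would couple $\Pi^x$ and $\Pi^{x'}$, $x<x'$, on the same underlying fragmentation $\Pi$: every block present in $\Pi^x(t)$ is present in $\Pi^{x'}(t)$ and sits higher above the barrier, so monotonicity of $W_p$ gives $M^x_t(p)\le M^{x'}_t(p)$ for all $t$, hence $M^x_\infty(p)\le M^{x'}_\infty(p)$ and $q(x)\ge q(x')$. For the limit, letting $x\to\infty$ with $t$ fixed in $q(x)=\mathbb E(\prod_{n\in\mathcal N^x_t}q(X^x_n(t)))$ and using that $\mathbb P$--a.s.\ $\mathcal N^x_t$ increases to the block index set of $\Pi(t)$ while $X^x_n(t)\to\infty$, monotonicity of $q$ together with dominated convergence give $q(+\infty)=\mathbb E(q(+\infty)^{N_t})$, where $N_t\in\mathbb N\cup\{\infty\}$ is the number of blocks of $\Pi(t)$; choosing $t$ with $\mathbb P(N_t\ge2)>0$ (possible since $\nu\ne0$) and noting $a^{N_t}\le a$ with equality forcing $N_t=1$ or $a\in\{0,1\}$ forces $q(+\infty)\in\{0,1\}$, and since $q$ is nonincreasing and $q<1$ everywhere we conclude $q(+\infty)=0$. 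Theorem~\ref{t.p.t.1.2.1.T} then yields $q(x)=\mathbb P(\zeta^x<\infty)$ for all $x\in\rpn$, which combined with $\{\zeta^x<\infty\}\subseteq\{M^x_\infty(p)=0\}$ gives $\mathbb P(\{M^x_\infty(p)=0\}\setminus\{\zeta^x<\infty\})=q(x)-\mathbb P(\zeta^x<\infty)=0$, so the symmetric difference is $\mathbb P$--null. The step I expect to be the main obstacle is the positivity $\mathbb E(M^x_\infty(p))>0$, i.e.\ ruling out $q\equiv1$; it rests on uniform integrability of $M^x(p)$, which is secured here cheaply by the domination $M^x_t(p)\le\|W_p\|_\infty M_t(p)$ and the uniform integrability of the classical additive fragmentation martingale $M(p)$ --- the one place where the restriction $p\in(\underline p,\bar p)$ is genuinely used.
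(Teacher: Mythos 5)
Your proof is correct and follows essentially the same route as the paper: establish the martingale property via many-to-one and harmonicity of $W_p$ under $\mathbb P^{(p)}$, then set $q(x)=\mathbb P(M^x_\infty(p)=0)$, show $Z^{x,q}$ is a multiplicative martingale, verify monotonicity of $q$ and $q(+\infty)=0$, and invoke Theorem~\ref{t.p.t.1.2.1.T}. The one genuine (and pleasant) difference is how you rule out $q(+\infty)=1$: the paper bounds $M^x_\infty(p)\le M_\infty(p)/\psi'_p(0+)$ and then invokes $L^q$--convergence of $M(p)$ for some $q>1$ (citing Bertoin and Knobloch) to get $L^q$--convergence of $M^x(p)$; you instead observe that since $\psi_p'(0+)>0$ the scale function $W_p$ is bounded with $\|W_p\|_\infty=1/\psi_p'(0+)$, so $M^x_t(p)\le\|W_p\|_\infty M_t(p)$ pathwise for every $t$, and domination by (a constant multiple of) the UI family $\{M_t(p)\}$ already gives uniform integrability of $M^x(p)$ and hence $\mathbb E(M^x_\infty(p))=W_p(x)>0$. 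This is slightly more economical, as it only uses the UI of $M(p)$ already recorded in Remark~\ref{r.equivalentmeasures} rather than the stronger $L^q$ statement. One small imprecision to flag: you write at one point that ``$p\in(\underline p,\bar p)$ forces $\psi_p'(0+)>0$''; by concavity of $\Phi$ one has $\Phi'(p)>\Phi'(\bar p)=c_{\bar p}$ on $(\underline p,\bar p)$, so it is the \emph{additional} hypothesis $c>\Phi'(p)$ (which you do use correctly elsewhere, e.g.\ in the harmonicity step) and not $p<\bar p$ that makes $\psi_p'(0+)=c-\Phi'(p)$ positive.
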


The following lemma is a version of the so-called {\it many-to-one identity}. To state it, let us introduce, for each $n\in\mathbb{N}$ and $t\geq 0$, the notation $\{\overleftarrow{\Pi}_n(s): s\leq t\}$ to mean ancestral evolution of the block $\Pi_i(t)$. That is to say, if $\Pi_n(t)$ is the block containing $k\in\mathbb{N}$, then $\{\overleftarrow{\Pi}_n(s): s\leq t\} = \{B_k(s): s\leq t\}$.


\begin{lemma}\label{l.many-to-one}
We have
\[
\mathbb E\left(\sum_{n\in\N}|\Pi_n(t)|f(\{|\overleftarrow{\Pi}_n(s)|:s\le t\})\right)=\mathbb E\left(f(\{|\Pi_1(s)|:s\le t\})\right)
\]
for every $t\in\rpn$ and $f:\rcll([0,t],[0,1])\to\rpn$, where $\rcll$ denotes the space of c\`adl\`ag functions.
\end{lemma}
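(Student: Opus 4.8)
The many-to-one identity is a standard consequence of the Poissonian construction of homogeneous fragmentation processes together with a change-of-measure / spine argument, and that is the route I would take. The key point is that the left-hand side is, up to the change of measure \eqref{1DCOM}, an expectation over the tagged fragment $\Pi_1$.

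First I would recall the fundamental fact (Bertoin \cite{bertfragbook}, Chapter 3) that picking the block $\Pi_n(t)$ containing a ``uniformly chosen'' integer is precisely the mechanism behind the size-biasing: for any nonnegative measurable functional $F$ of a block trajectory,
\[
\mathbb E\!\left(\sum_{n\in\N}|\Pi_n(t)|\,F(\{|\overleftarrow{\Pi}_n(s)|:s\le t\})\right)
=\mathbb E^{(0)}\!\left(F(\{|\Pi_1(s)|:s\le t\})\,\right)
\]
is not quite right as stated; rather one must be careful about which measure appears. The cleanest formulation is this: by exchangeability of $\Pi(t)$ and the fact that $\sum_n|\Pi_n(t)|\le 1$, for a \emph{fixed} integer, say $1$, the block $B_1(t)=\overleftarrow{\Pi}_{n}(t)$ (where $n$ is the index with $1\in\Pi_n(t)$) has the property that
\[
\mathbb E\!\left(\sum_{n\in\N}|\Pi_n(t)|\,f(\{|\overleftarrow{\Pi}_n(s)|:s\le t\})\right)
=\mathbb E\!\left(\frac{f(\{|B_1(s)|:s\le t\})}{|B_1(t)|}\cdot|B_1(t)|\right)
=\mathbb E\!\left(f(\{|B_1(s)|:s\le t\})\right),
\]
provided one justifies the first equality. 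That first equality is the heart of the matter: it says that summing a block functional weighted by the block's asymptotic frequency equals evaluating the same functional on the block containing the tagged integer $1$, \emph{without} any extra weight. This is exactly Lemma~1.35-type bookkeeping in \cite{Kno11} (or the discussion around the tagged fragment in \cite{bertfragbook}); concretely it follows because, conditionally on $\Pi(t)$, the probability that integer $1$ lies in block $\Pi_n(t)$ is $|\Pi_n(t)|$ almost surely (the asymptotic frequency is the empirical density of the block), so taking expectations over the randomness of which block contains $1$ reproduces the frequency-weighted sum.

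So the key steps, in order, are: (1) Condition on $\mathcal G_t$ (the asymptotic-frequency filtration) and use that, given the partition-valued path, $\mathbb P(1\in\Pi_n(\cdot)\text{ throughout})=|\Pi_n(t)|$ to rewrite the frequency-weighted sum as $\mathbb E(f(\{|\overleftarrow{\Pi}_{\iota}(s)|:s\le t\})\mid\mathcal G_t)$ where $\iota$ is the (random) index of the block containing $1$; (2) observe that $\{\overleftarrow{\Pi}_\iota(s):s\le t\}=\{B_1(s):s\le t\}$ by definition of the ancestral line; (3) take the outer expectation and recall from \eqref{e.kappa} and the text that $(-\log|B_1(s)|)_{s\le t}=\xi_1$ has the same law as $\xi=(-\log|\Pi_1(s)|)_{s\le t}$ (both are the killed subordinator with Laplace exponent $\Phi$), which gives $\mathbb E(f(\{|B_1(s)|:s\le t\}))=\mathbb E(f(\{|\Pi_1(s)|:s\le t\}))$. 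Step (4) is a routine measurability/monotone-class check that it suffices to prove the identity for $f$ continuous and then extend to measurable $f:\rcll([0,t],[0,1])\to\rpn$ by monotone convergence, together with checking that both sides are well-defined (possibly $+\infty$) so no integrability subtlety arises.

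\textbf{Main obstacle.} The only genuinely delicate point is step (1): rigorously justifying that the frequency-weighted sum over blocks equals the expectation of the functional evaluated along the tagged block, i.e.\ that ``size-biasing by asymptotic frequency = tagging a fixed integer''. One must be careful that $|\Pi_n(t)|$ is an a.s.-defined limit and that the event $\{1\in\overleftarrow{\Pi}_n(s)\ \forall s\le t\}$ is the same as $\{1\in\Pi_n(t)\}$ (the ancestral line is nested, so membership at time $t$ determines the whole path), and that conditionally on $\mathcal F_t$ the integer $1$ — by exchangeability of the paint-box construction — is "uniformly distributed" among $\mathbb N$ with the block of index $n$ having mass exactly $|\Pi_n(t)|$. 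Everything else (the law of $\xi_1$, the reduction to continuous $f$, the handling of the cemetery state $+\infty$ on $\{\xi$ killed$\}$, where $f$ is simply evaluated on the pre-killing path and the block is $\emptyset$) is bookkeeping that I would dispatch quickly.
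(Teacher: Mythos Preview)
Your approach is the same as the paper's, which simply invokes the size-biased pick property of $\Pi_1(t)$ and cites Lemma~2 of \cite{Betal} without further detail.

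Two small corrections to your sketch. First, step~(3) is vacuous: by the paper's convention $\Pi_1(s)$ is the block with least element, hence $B_1(s)=\Pi_1(s)$ identically, so no ``same law'' argument is needed there. Second, and more substantively, your step~(1) as phrased does not work: conditionally on the partition-valued path $(\Pi(s))_{s\le t}$ (or on $\mathcal G_t$), the event $\{1\in\Pi_n(t)\}$ is deterministic --- it holds iff $n=1$ --- so you cannot extract the weight $|\Pi_n(t)|$ by conditioning in this way. The correct formulation conditions instead on the \emph{size-ordered} frequencies $|\Pi(\cdot)|^\downarrow$ (or on the unlabelled partition), after which the block containing the fixed integer $1$ is genuinely a size-biased pick by the paintbox representation; equivalently, write the left-hand side as the almost-sure limit $\lim_{N\to\infty}N^{-1}\sum_{k=1}^N f(\{|B_k(s)|:s\le t\})$ and use that the processes $(|B_k(s)|)_{s\ge 0}$, $k\in\mathbb N$, are exchangeable with common law that of $(|\Pi_1(s)|)_{s\ge 0}$. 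This is a repair of phrasing rather than of strategy --- you have correctly identified both the mechanism and the delicate point.
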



\begin{proof}
The proof follows directly as a consequence of the fact that $\Pi_1(t)$ has the law of a size-biased pick from $\Pi(t)$. See for example Lemma 2 of Berestycki et al. \cite{Betal}.
\end{proof}

The next lemma establishes the first assertion of Theorem~\ref{t.mainresult.2} in that it shows that under $\mathbb P$ the process $M^x(p)$ is a martingale for suitable $c$ and $p$.

\begin{lemma}\label{l.0}
Let $c>c_{\bar p}$ and let $p\in(\underline p,\bar p)$ be such that $c>\Phi'(p)$. Further, let $x\in\rpn$. Then the process $M^x(p)$ is a $\mathbb P$--martingale with respect to the filtration $\mathcal F$. 
\end{lemma}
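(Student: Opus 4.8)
The plan is to verify the martingale property by conditioning on $\mathcal F_t$ and exploiting the strong fragmentation (Markov) property, which reduces the problem to a single identity: namely that the map $y\mapsto W_p(y)$ composed with the killed spectrally negative L\'evy process $X_1^x$ produces, after the appropriate Esscher-type tilting, a martingale. First I would use the fragmentation property of $\Pi$ to write, for $s,t\in\rpn$,
\[
\mathbb E\left(\left.M^x_{t+s}(p)\right|\mathcal F_t\right)
=\sum_{n\in\mathcal N^x_t}e^{\Phi(p)t}|\Pi^x_n(t)|^{1+p}\,
\left.\mathbb E\left(M^y_s(p)\right)\right|_{y=x+ct+\log|\Pi^x_n(t)|},
\]
where the expectation on the right is the analogous object started from a single block, using that each surviving block at time $t$ evolves as an independent copy of the whole killed fragmentation process started from its current (log-)distance $y$ to the barrier. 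Thus it suffices to prove that $\mathbb E(M^y_s(p)) = W_p(y)$ for every $y\ge 0$ and $s\ge 0$, and then the displayed conditional expectation collapses to $M^x_t(p)$.

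The next step is the single-block computation, which is where the many-to-one identity (Lemma~\ref{l.many-to-one}) and the change of measure (\ref{1DCOM}) enter. Applying Lemma~\ref{l.many-to-one} with $f$ chosen so that $f(\{|\overleftarrow\Pi_n(r)|:r\le s\}) = W_p(y+cs+\log|\Pi_n(s)|)\,\mathds 1_{\{\tau^-_{n,y}>s\}}\,|\Pi_n(s)|^p$ (the weight $|\Pi_n(t)|$ in the many-to-one identity supplies one of the $1+p$ powers and the $e^{\Phi(p)s}$ is deterministic), one gets
\[
\mathbb E(M^y_s(p)) = e^{\Phi(p)s}\,\mathbb E\!\left(|\Pi_1(s)|^{p}\,W_p(y+cs+\log|\Pi_1(s)|)\,\mathds 1_{\{\tau^-_{1,y}>s\}}\right).
\]
Now I would recognise $-\log|\Pi_1(s)| = \xi_1(s)$ and rewrite $e^{\Phi(p)s}|\Pi_1(s)|^p = e^{\Phi(p)s - p\xi_1(s)} \cdot |\Pi_1(s)|^{0}$... more precisely $e^{\Phi(p)s}|\Pi_1(s)|^{1+p} = e^{\Phi(p)s - (1+p)\xi_1(s)}$, and since $|\Pi_1(s)|^p = e^{-p\xi_1(s)}$ and $e^{\Phi(p)s - p\xi_1(s)}$ is precisely the density $\D\mathbb P^{(p)}/\D\mathbb P$ on $\mathcal F_s$ from (\ref{1DCOM}) (after absorbing the extra factor correctly — note $X_1(s) = cs - \xi_1(s)$, so $e^{pX_1(s)-\psi(p)s} = e^{\Phi(p)s - p\xi_1(s)}\cdot e^{\text{(the }cs\text{ part)}}$; I would track the constants carefully here). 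Under $\mathbb P^{(p)}$ the process $X_1$ is the spectrally negative L\'evy process of (\ref{diff-this}), and $X_1^y(s) = (X_1(s)+y)\mathds 1_{\{\tau^-_{1,y}>s\}} = y + cs + \log|\Pi_1(s)|$ on $\{\tau^-_{1,y}>s\}$. Hence the expectation becomes $\mathbb E^{(p)}(W_p(X_1^y(s)))$, and the claim reduces to the statement that $(W_p(X_1^y(s)))_{s\ge0}$ is a $\mathbb P^{(p)}$-martingale, equivalently that $W_p$ is harmonic for $X_1$ under $\mathbb P^{(p)}$ killed on exiting $(0,\infty)$.

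That final fact is classical for scale functions of spectrally negative L\'evy processes: $W_p$ restricted to $[0,\infty)$ is (up to the boundary behaviour at $0$, which is governed by $W_p(0+)=1/c$ since $X_1$ has bounded variation) an invariant function for the process killed at $\tau^-_{1,0}$; one standard route is to note $W_p$ solves the relevant renewal equation / is $q$-harmonic with $q=0$, or to invoke the identities of Theorem~8.1 of \cite{Kyp06}, e.g. that $\mathbb E^{(p)}_y(W_p(X_1(e_q)); e_q < \tau^-) + \dots$ behaves correctly, and then let the killing rate tend to zero; alternatively apply the optional stopping / resolvent identity (8.8) already used in the proof of Lemma~\ref{p.2}. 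I expect the main obstacle to be purely bookkeeping: correctly matching the exponential weights $e^{\Phi(p)t}|\Pi^x_n(t)|^{1+p}$ against the Radon--Nikodym density (\ref{1DCOM}) and the $|\Pi_1|$-weight in the many-to-one identity, and handling the indicator $\mathds 1_{\{\tau^->s\}}$ (i.e.\ the killing of $X_1^x$ at $-x$, which is exactly why the scale function $W_p$, and not some other $p$-harmonic function, appears). Once the single-block martingale property $\mathbb E(M^y_s(p)) = W_p(y)$ is in hand, non-negativity is immediate from non-negativity of $W_p$, and the full martingale property follows from the conditioning identity above; the a.s.\ existence of $M^x_\infty(p)$ is then the martingale convergence theorem, though that part belongs to Theorem~\ref{t.mainresult.2} rather than to this lemma.
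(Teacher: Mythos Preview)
Your proposal is correct and follows essentially the same route as the paper: fragmentation property to reduce to the constancy of $\mathbb E(M^y_s(p))$, then many-to-one plus the change of measure (\ref{1DCOM}) to turn that into a $\mathbb P^{(p)}$-expectation, and finally the $\mathbb P^{(p)}$-martingale property of $W_p(X_1^y(\cdot))\mathds1_{\{\cdot<\tau^-_{1,y}\}}$.

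Two places where the paper is crisper than your sketch. First, your bookkeeping worry is unfounded: since $\psi(p)=cp-\Phi(p)$ and $X_1(s)=cs-\xi_1(s)$, one has $e^{pX_1(s)-\psi(p)s}=e^{\Phi(p)s-p\xi_1(s)}$ exactly, with no leftover ``$cs$ part''; combined with the extra factor $|\Pi_1(s)|$ supplied by the many-to-one lemma this matches $e^{\Phi(p)s}|\Pi_1(s)|^{1+p}$ on the nose. Second, rather than appealing vaguely to ``classical'' harmonicity of $W_p$, the paper obtains the $\mathbb P^{(p)}$-martingale in one line from (\ref{8.15}): since $c>\Phi'(p)$ gives $\psi_p'(0+)>0$, the Markov property yields
\[
\mathbb E^{(p)}\bigl(\mathds1_{\{\tau^-_{1,y}=\infty\}}\,\big|\,\mathcal F_s\bigr)=\psi_p'(0+)\,W_p(X_1^y(s))\,\mathds1_{\{s<\tau^-_{1,y}\}},
\]
so the right-hand side is a \emph{closed} (hence uniformly integrable) martingale. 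This is exactly where the hypothesis $c>\Phi'(p)$ enters, a point your sketch leaves implicit.
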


\begin{proof}
Let us first show that for any $t\in\rpn$ the process $(W_p(X^x_1(s))\mathds1_{\{s<\tau^-_{1,x}\}})_{s\in\rpn}$ is a $\mathbb P^{(p)}$--martingale with respect to $\mathcal F$. It is a  straightforward exercise using (\ref{diff-this}) to show that  $\psi_p'(0+) = c - \Phi'(p)>0$. By the Markov property of $X_1$ under $\mathbb P^{(p)}$ we thus infer from (\ref{8.15}) that
\begin{eqnarray}\label{e.l.0.1}
\mathbb E^{(p)}\left(\left.\mathds1_{\{\tau^-_{1,x}=\infty\}}\right|\mathcal F_s\right) &=& 
\left.\mathbb P^{(p)}\left(\tau^-_{1,y}=\infty\right)\right|_{y=x+X_1(s)}\mathds1_{\{s<\tau^-_{1,x}\}}\notag
\\
&=& \psi_p'(0+)W_p(x+X_1(s))\mathds1_{\{s<\tau^-_{1,x}\}}
\end{eqnarray}
holds $\mathbb P^{(p)}$--a.s. for any $s\in\rpn$. Note that the left--hand side of (\ref{e.l.0.1}) defines a closed $\mathbb P^{(p)}$--martingale. Further, observe that $x+X_1(s)=X^x_1(s)$ on the event ${\{s<\tau^-_{1,x}\}}$. 

By means of Lemma~\ref{l.many-to-one} we deduce that 
\begin{align}\label{e.l.0.2}
\mathbb E\left(M^x_t(p)\right) &= e^{\Phi(p)s}\mathbb E\left(\sum_{n\in\mathcal N^x_t}W_p\left(x + ct +\log |\Pi^x_n(t)|\right)e^{\Phi(p)t}|\Pi^x_n(t)|^{1+p}\right)\notag 
\\[0.5ex]
&=  \mathbb E\left(W_p(X^x_1(t))\mathds1_{\{t<\tau^-_{1,x}\}}e^{\Phi(p)t-p\xi(t)}\right)\notag 
\\[0.5ex]
&= \mathbb E^{(p)}\left(W_p(X^x_1(t))\mathds1_{\{t<\tau^-_{1,x}\}}\right)
\notag\\[0.5ex]
&= W_p(x)
\end{align}
for all $t\in\rpn$, where the final equality is a consequence of the above--mentioned martingale property of $(W_p(X^x_1(s))\mathds1_{\{s<\tau^-_{1,x}\}})_{s\in\rpn}$. 
In view of (\ref{e.l.0.2}) we infer from the fragmentation property of $\Pi$ that
\begin{align*}
\mathbb E\left(\left.M^x_{t+s}(p)\right|\mathcal F_t\right)
&= \sum_{n\in\mathcal{N}_t^x}e^{\Phi(p)t}|\Pi^x_n(t)|^{1+p}\mathbb E\left(\left.M^{(n)}\right|\mathcal F_t\right)
\\[0.5ex]
&= \sum_{n\in\mathcal{N}_t^x
}e^{\Phi(p)t}|\Pi^x_n(t)|^{1+p}W_p(x + ct +\log |\Pi^x_n(t)|)
\\[0.5ex]
&= M^x_t(p)
\end{align*}
$\mathbb P$--a.s. for all $s,t\in\rpn$, where conditional on $\mathcal F_t$ the $M^{(n)}$ are independent and satisfy
\[
\mathbb P\left(\left.M^{(n)}\in\cdot\right|\mathcal F_t\right)=\left.\mathbb P\left(M^y_s(p)\in\cdot\right)\right|_{y=x + ct +\log |\Pi^x_n(t)|}
\]
$\mathbb P$--almost surely.
\end{proof}

Let us now turn to the proof of Theorem~\ref{t.mainresult.2}. The main ingredient in the proof of Theorem~\ref{t.mainresult.2} turns out to be Theorem~\ref{t.p.t.1.2.1.T}, which deals with the product martingale $Z^{x,f}$.

{\it Proof of Theorem~\ref{t.mainresult.2}.}
According to Lemma~\ref{l.0} we have that $M^x(p)$ is a nonnegative martingale and by the Martingale Convergence Theorem it follows  that $M^x_\infty(p):=\lim_{t\to\infty}M^x_t(p)$\label{p.M^x_infty.2} exists $\mathbb P$--almost surely.
It remains to show that the symmetric difference $\{M^x_\infty(p)=0\}\triangle\{\zeta^x<\infty\}$ is a $\mathbb P$--null set. 

Define the function $g_p:\rpn\to[0,1]$\label{p.gp} given by
\[
g_p(x)=\mathbb P(M^x_\infty(p)=0)
\]
for any $x\in\rpn$. Resorting to the fragmentation property we deduce that  
\[
\mathbb P(M^x_\infty(p)=0|\mathcal F_t)=\prod_{n\in\mathcal N^x_t}g_p\left(x + ct + \log |\Pi^x_n(t)|\right)=Z^{x,g_p}_t
\]
holds $\mathbb{P}$-almost surely for all $t\in\rpn$. 
Therefore, $Z^{x,g_p}$ is a $\mathbb P$--martingale. 
Note also that, thanks to the fact that both $\mathcal{N}^x_t$ and $W_p(x)$ are monotone increasing in $x$, for all $\epsilon>0$, $M^{x+\epsilon}_\infty(p)\geq M^x_\infty(p)$ and hence $g_p(\cdot)$ is a monotone function. It follows that $g_p(+\infty)$ exists in [0,1] and moreover, by taking the expectation and then the limit as $x\to\infty$ in (23), we infer that
\begin{equation}\label{e.1}
g_p(+\infty)\in\{0,1\}
\end{equation}
as otherwise we are led to the contradictory statement that $g_p(+\infty)<g_p(+\infty)$. 
 Recall the martingale $M(p)$ that we defined in Remark~\ref{r.equivalentmeasures}. Taking account of (\ref{8.15}) we have that $M^x_\infty(p)\leq M_\infty(p)/\psi'_p(0+)$. Hence, since $M(p)$ is an $L^q$-convergent martingale (cf. Theorem 2 of \cite{Be4} and Proposition~3.5 of \cite{Kno11}) for some $q>1$, it follows that $M^x(p)$ is too. 
Coupled with the stochastic monotonicity of $M^x_\infty(p)$ in $x$, this implies in view of (\ref{e.1}) that necessarily $g_p(+\infty) = 0$.

We may now apply Theorem~\ref{t.p.t.1.2.1.T} and infer that 
$
g_p(x)=\mathbb P(\zeta^x<\infty).
$
Since $\{\zeta^x<\infty\}\subseteq\{M^x_\infty(p)=0\}$ for each $x>0$ 
this implies that 
\[
\mathbb P\left(\{\zeta^x<\infty\}\triangle\{M^x_\infty(p)=0\}\right)=0
\]
for every $x>0$ as required.
\hfill$\square$

\section{Exponential decay rate of the largest fragment}\label{s.asympspeed}
The final section of this paper is devoted to the proof of Theorem~\ref{l.fp.1}. That is, in this section we deal with the asymptotic behaviour of the largest fragment in the killed fragmentation process.

{\it Proof of Theorem~\ref{l.fp.1}.} Our approach is based on the method of proof for Corollary~1.4 in \cite{bertfragbook} and makes use of the martingale $M^x(p)$ that we considered in the previous section.

First note that since $\lambda_1^x(t)\leq \lambda_1(t)$, it follows from Proposition \ref{largestfragmentnotkilled} that $\mathbb P$--a.s.,
\begin{equation}\label{e.l.fp.1.0b}
\limsup_{t\to\infty}\frac{1}{t}\log(\lambda^x_1(t))\le c_{\bar p}.
\end{equation}

Recall that we are assuming $c>c_{\bar p}$. In order to deal with the liminf  set 
\[
\hat p:=\inf\left\{p\in(\underline p,\bar p):\Phi'(p)< c\right\},
\] 
and let $p\in(\hat p,\bar p)$ as well as $\epsilon\in(0,p-\hat p)$. Since 
\[
\psi_p'(0+)= c-\Phi'(p)>\psi'_{p-\epsilon}(0+) =c-\Phi'(p-\epsilon)>c-\Phi'(\hat p)\ge0,
\] 
where $\Phi'(-\infty):=0$, we infer from (\ref{8.15}) that the scale functions $W_p$ and $W_{p-\epsilon}$ are uniformly bounded from above by $1/\psi'_p(0+)$ and $1/\psi'_{p-\epsilon}(0+)$, respectively. Moreover, according to Lemma~8.6 in \cite{Kyp06} we have $W_{p}(0+) = W_{p-\epsilon}(0+)=c^{-1}$. Hence,  there exists a constant $K>0$ such that $W_p(y)\leq K W_{p-\epsilon}(y)$ for all $y\geq 0$.
Observe that
\begin{align}\label{e.l.fp.1.1}
M_t^x(p)&
=\sum_{n\in\mathcal N^x_t}W_p\left(x + ct +\log |\Pi^x_n(t)|\right)e^{\Phi(p)t}|\Pi^x_n(t)|^{1+p}
\notag\\[0.5ex]
&\leq Ke^{(\Phi(p)-\Phi(p-\epsilon))t}[\lambda^x_1(t)]^\epsilon e^{\Phi(p-\epsilon)t}\sum_{n\in\mathcal N^x_t}W_{p-\epsilon}\left(x+ct+\log|\Pi^x_n(t)|\right)|\Pi^x_n(t)|^{1+p-\epsilon}\notag
\\[0.5ex]
&=K e^{(\Phi(p)-\Phi(p-\epsilon))t}[\lambda^x_1(t)]^\epsilon M^x_t(p-\epsilon).
\end{align}
According to Theorem~\ref{t.mainresult.2} we have that both  $M^x_\infty(p-\epsilon)$ and $M^x_\infty(p)$ are $(0,\infty)$-valued  $\mathbb P(\cdot|\zeta^x=\infty)$--almost surely. Consequently, taking the logarithm, dividing by $t$ and taking the limit inferior as $t\to\infty$ we thus deduce from (\ref{e.l.fp.1.1}) that 
\[
\liminf_{t\to\infty}\frac{1}{t}\log(\lambda^x_1(t))\ge-\frac{\Phi(p)-\Phi(p-\epsilon)}{\epsilon}
\]
$\mathbb P(\cdot|\zeta^x=\infty)$--almost surely. Therefore, we have
\begin{equation}\label{e.l.fp.1.2}
\liminf_{t\to\infty}\frac{1}{t}\log(\lambda^x_1(t))\ge-\lim_{\varepsilon\to0}\frac{\Phi(p)-\Phi(p-\varepsilon)}{\varepsilon}=-\Phi'(p)
\end{equation}
$\mathbb P(\cdot|\zeta^x=\infty)$--almost surely. Letting $p\to\bar p$ and resorting to the fact that $\Phi$ is the Laplace exponent of $\xi$, which ensures the continuity of $\Phi'$, (\ref{e.l.fp.1.2}) results in 
\begin{equation}
\liminf_{t\to\infty}\frac{1}{t}\log(\lambda^x_1(t))\ge-\Phi'(\bar p)
\label{bearinmind}
\end{equation}
$\mathbb P(\cdot|\zeta^x=\infty)$--almost surely. 

Recalling that $c_{\bar p}=\Phi'(\bar p)$, (\ref{e.l.fp.1.0b}) and (\ref{bearinmind}) imply the assertion of the theorem.
\hfill$\square$




\section*{Acknowledgements}

Both authors would like to thank Julien Berestycki and an anonymous referee for their valuable comments on this work.



\begin{thebibliography}{99}
\bibitem{BG10} B\'erard, J., Gou\'er\'e, J.-B. (2010) Brunet-Derrida behavior of branching-selection particle systems on the line, {\it Comm. Math. Phys.} {\bf 298}, 323-342.

\bibitem{BBS11} Berestycki, J., Berestycki N. and Schweinsberg, J. (2011) The genealogy of branching Brownian motion with absorption, {\it To appear in Ann. Probab.}

\bibitem{Betal} Berestycki, J., Harris, S.C. and Kyprianou, A.E. (2011) Travelling waves and homogeneous fragmenation, {\it  Ann. Appl. Probab.} {\bf 21}, 1749-1794.

\bibitem{Ber96} Bertoin, J. (1996) {\it L\'evy Processes}, Cambridge University Press.



\bibitem{Be4} Bertoin, J. (2003) Asymptotic behaviour of fragmentation processes, {\it J. Europ. Math. Soc.} {\bf 5}, 395 -416.


\bibitem{bertfragbook}  Bertoin, J. (2006) {\it Random Fragmentation and Coagulation Processes}, Cambridge University Press.

\bibitem{BR03} Bertoin, J., Rouault, A. (2003) Additive martingales and probability tilting for homogeneous fragmentations, Preprint.

\bibitem{BR05} Bertoin, J., Rouault, A.  (2005) Discritization methods for homogeneous fragmentations, \textit{J. London Math. Soc.} {\bf 72}, 91-109.

\bibitem{Bre92} Breiman, L. (1992) \textit{Probability}, second edition, SIAM.

\bibitem{DS07} Derrida, B., Simon, D. (2007) The survival probability of a branching random walk in presence of an absorbing wall. {\it Europhys. Lett. EPL} {\bf 78}, Art. 60006.

\bibitem{DS08} Derrida, B. and Simon, D. (2008) Quasi-stationary regime of a branching random walk in presence of an absorbing wall. {\it J. Stat. Phys.} {\bf 131}, 203-233.

\bibitem{Dur91} Durrett, R. (1991) \textit{Probability: theory and examples}, Duxbury Press.

\bibitem{GHS11}  Gantert N., Hu Y., and Shi Z. (2011) Asymptotics for the survival probability in a supercritical branching random walk, \textit{Ann. Inst. H. Poincar\'e Probab. Statist.} {\bf 47}, 111-129.

\bibitem{HH07} Harris, J.W. and Harris, S.C. (2007), Survival probabilities for branching Brownian motion with absorption, {\it Elect. Comm. Probab.} {\bf 12}, 81-92.

\bibitem{harrisetal} Harris, J., Harris, S.C. and Kyprianou, A.E.   (2006) Further probabilistic analysis of the Fisher-Kolmogorov-Petrovskii-Piscounov equation: one sided travelling waves, {\it  Ann. Inst. H. Poincar\'e Probab. Statist.} {\bf 42}, 125-145.

\bibitem{HKK} Harris, S.C., Knobloch, R. and Kyprianou, A.E. (2010) Strong law of large numbers for fragmentation processes, {\it Ann. Inst. H. Poincar\'e Probab. Statist.} {\bf 46}, 119-134.

\bibitem{Kno11} Knobloch, R. (2011) \textit{Asymptotic properties of fragmentation processes}, PhD Thesis, University of Bath.

\bibitem{Kno12} Knobloch, R. (2012) One-sided FKPP travelling waves in the context of homogeneous fragmentation processes, Preprint, arXiv: 1204.0758.

\bibitem{Kyp06} Kyprianou, A.E. (2006) \textit{Introductory Lectures on Fluctuations of L\'evy Processes with Applications}, Springer.
\end{thebibliography}
\end{document}